\newtheorem{thm}{Theorem}[section]
\newtheorem{cor}[thm]{Corollary}
\newtheorem{prop}[thm]{Proposition}
\theoremstyle{definition}
\theoremstyle{remark}
\newtheorem{rem}[thm]{Remark}
\newtheorem{exm}[thm]{Example}
\numberwithin{equation}{section}
\begin{document}

\title{Prime Avoidance Property}%
\author{Alborz Azarang}%
\keywords{Prime ideal, Avoidance property}%
\subjclass[2010]{13A15, 13G05, 13F05}%

\maketitle

\centerline{Department of Mathematics, Faculty of Mathematical Sciences and Computer,}
\centerline{ Shahid Chamran University
of Ahvaz, Ahvaz-Iran} \centerline{a${}_{-}$azarang@scu.ac.ir}
\centerline{ORCID ID: orcid.org/0000-0001-9598-2411}

\begin{abstract}
Let $R$ be a commutative ring, we say that $\mathcal{A}\subseteq Spec(R)$ has prime avoidance property, if $I\subseteq \bigcup_{P\in\mathcal{A}}P$ for an ideal $I$ of $R$, then there exists $P\in\mathcal{A}$ such that $I\subseteq P$. We exactly determine when $\mathcal{A}\subseteq Spec(R)$ has prime avoidance property. In particular, if $\mathcal{A}$ has prime avoidance property, then $\mathcal{A}$ is compact. For certain classical rings we show the converse holds (such as Bezout rings, $QR$-domains, zero-dimensional rings and $C(X)$). We give an example of a compact $\mathcal{A}\subseteq Spec(R)$ of a Prufer domain $R$ which has not $P.A$-property. Finally, we show that if $V,V_1,\ldots, V_n$ are valuations for a field $K$ and $V[x]\nsubseteq \bigcup_{i=1}^n V_i$ for some $x\in K$, then there exists $v\in V$ such that $v+x\notin \bigcup_{i=1}^n V_i$.

\end{abstract}

\vspace{0.5cm}
\section{Introduction}
Quentel in \cite[Proposition 9]{quen} proved that the following are equivalent for a commutative reduced ring $R$ (see also \cite[Propositions 1.4 and 1.15]{mat}):

\begin{enumerate}
\item $Q(R)$, the classical ring of quotient of $R$, is a $VNR$.
\item If $I$ is an ideal of $R$ contained in the union of the minimal prime ideals of $R$, then $I$ is contained in one of them.
\item $Min(R)$ is compact; and if a finitely generated ideal is contained in the union of the minimal prime ideals of $R$, then it is contained in one of them.
\end{enumerate}

In \cite{quen}, Quentel has produced an example of a reduced ring $R$ where $Min(R)$ is compact, but $Q(R)$ is not a $VNR$, which shows the second part of $(3)$ is necessary.\\

The main aim of this paper is to show that the conditions $(2)$ and $(3)$ of the above are equivalent for each subset $\mathcal{A}$ of $Spec(R)$, for arbitrary commutative ring $R$.\\

In commutative ideal theory one of the most important covering results is the Prime Avoidance Lemma which state that if an ideal $I$ of a commutative ring $R$ is covered by a
finite union of prime ideals $P_1,\cdots,P_n$, then there exists $i$ such that $I\subseteq P_i$, see \cite[Theorem 81]{kap}, \cite{mcadm}, \cite{mac}, \cite{shvm} and \cite{krm}. Let us denote by $V_{\mathcal{A}}(I)$ the set of all prime ideals in $\mathcal{A}$ which contains $I$, where $R$ is a commutative ring, $\mathcal{A}\subseteq Spec(R)$ and $I$ is an ideal of $R$. It is well known that the set of all $V_{\mathcal{A}}(I)$, where $I$ ranges over all ideals of $R$, is a topology for closed sets on $\mathcal{A}$, which is called hull-kernel or Zariski topology on $\mathcal{A}$ (when $\mathcal{A}=Spec(R)$, $\mathcal{A}=Max(R)$ and $\mathcal{A}=Min(R)$ we apply $V(I)$, $V_M(I)$ and $V_m(I)$, respectively). It is well known that $Spec(R)$ and $Max(R)$ are compact spaces. In fact one easily find that the fact which implies these two spaces are compact is  "each proper ideal of $R$ can be embedded in a maximal ideal" which is well known as Krull Maximal Ideal Theorem; In other words,  if $I$ is an ideal of $R$ such that $I\subseteq \bigcup_{M\in Max(R)} M$, then there exists $M\in Max(R)$ such that $I\subseteq M$. It seems that there exists a relation between the compactness of a subset $\mathcal{A}$ of $Spec(R)$ and the fact that $\mathcal{A}$ has prime avoidance property. Note that by Krull Maximal Ideal Theorem one can easily see that each subset $X$ of $Spec(R)$ which contains $Max(R)$ is compact. Moreover, if $R$ is a noetherian ring then each subset of $Spec(R)$ is compact (i.e., $Spec(R)$ is noetherian), since each sum of a family of ideals in $R$ reduced to a finite sum of the family. Note that there exist non-noetherian rings for which $Spec(R)$ is noetherian, in fact $Spec(R)$ is noetherian if and only if $ACC$ holds on radical ideals of $R$, i.e., for each ideal $I$ of $R$ there exists $a_1,\ldots, a_n$ such that $\sqrt{I}=\sqrt{(a_1,\ldots,a_n)}$; which also implies that $Min(I)$ is finite for each ideal $I$ of $R$. Note that by
\cite[Proposition 3.8]{mat}, for a ring $R$, $Min(R)=\{P_{\alpha}\}$ is finite if and only if for each $\beta$, $P_{\beta}\nsubseteq \bigcup_{\alpha\neq\beta} P_{\alpha}$, in other words for each $P\in Min(R)$, the set $Min(R)\setminus\{P\}$ has prime avoidance property (for another finiteness result for $Min(R)$ see \cite{and}). Now the following is in order.

\begin{rem}
Let $R$ be a ring. Then $Min(R)=\{P_{\alpha}\}$ is finite if and only if $Min(R)$ is compact and for each $\beta$, $Q_\beta:=\bigcap_{\alpha\neq \beta} P_{\alpha}\nsubseteq P_\beta$. In particular, if in addition $R$ is a reduced ring, then each $P_\beta$ is  not essential and $P_\beta=ann(q)$ for some $q\in Q_\beta$. To see this, it is clear that if $Min(R)$ is finite then $Min(R)$ is compact and for each $\beta$, $Q_\beta\nsubseteq P_{\beta}$. Conversely, assume that $Min(R)$ is compact and for each $\beta$ let $x_{\beta}\in Q_{\beta}\setminus P_{\beta}$. Therefore $V_m(x_{\beta})^c=\{P_\beta\}$. Now, since the collection $\{V_m(x_{\beta})^c\}$ is an open cover for $Min(R)$ and $Min(R)$ is compact, we immediately conclude that $Min(R)$ is finite. Finally, if in addition $R$ is a reduced ring then clearly for each $\beta$, $Q_{\beta}\neq 0$ and $Q_{\beta}\cap P_{\beta}=N(R)=0$. which shows that each $P_{\beta}$ is not essential and $Q_\beta P_{\beta}=0$. Therefore $P_{\beta}\subseteq ann(Q_{\beta})$ and since $Q_\beta\nsubseteq P_\beta$ we obtain that $ann(Q_\beta)=P_\beta$. This immediately shows that for each $q\in Q_\beta\setminus P_\beta$ we have $P=ann(q)$.
\end{rem}

By the above remark we give another proof for finiteness of $Min(R)$ for noetherian rings.

\begin{cor}
Let $R$ be a noetherian ring, then $Min(R)$ is finite.
\end{cor}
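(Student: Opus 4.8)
The plan is to verify, for $R$ noetherian, the two conditions of the Remark above and then simply quote it. Thus I must show (i) $Min(R)$ is compact, and (ii) for every minimal prime $P_\beta$ of $R$ one has $Q_\beta=\bigcap_{\alpha\neq\beta}P_\alpha\nsubseteq P_\beta$.

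Condition (i) is immediate: as recalled in the Introduction, a noetherian $R$ satisfies $ACC$ on ideals, hence on radical ideals, so $Spec(R)$ is a noetherian topological space; consequently every subspace of $Spec(R)$ is noetherian and therefore compact, in particular $Min(R)$.

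For condition (ii), fix a minimal prime $P_\beta$. The crux is to produce an element $s\in R$ with $s\notin P_\beta$ but $s\in P_\alpha$ for all $\alpha\neq\beta$; then $s\in Q_\beta\setminus P_\beta$ and we are done. To get such an $s$, localize at $P_\beta$: since $P_\beta$ is minimal, $R_{P_\beta}$ has a unique prime ideal, so it is a zero-dimensional noetherian local ring, whence its maximal ideal is nilpotent and $(P_\beta R_{P_\beta})^N=0$ for some $N\geq 1$. Because $R$ is noetherian the ideal $P_\beta^{N}$ is finitely generated; for each generator $y$ of $P_\beta^{N}$ there is $t_y\notin P_\beta$ with $t_y y=0$, and the product $s$ of these finitely many $t_y$ satisfies $s\notin P_\beta$ and $sP_\beta^{N}=0$. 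Now for any minimal prime $P_\alpha$ with $\alpha\neq\beta$ we have $sP_\beta^{N}\subseteq P_\alpha$, and since $P_\alpha$ is prime and $P_\beta\nsubseteq P_\alpha$ (distinct minimal primes are incomparable), it follows that $s\in P_\alpha$. Hence $s\in Q_\beta\setminus P_\beta$, proving (ii). Equivalently, $V_m(s)^c=\{P_\beta\}$, so this step is exactly the ``singleton open set'' mechanism used inside the Remark; combined with (i) it again yields that $Min(R)$ is finite.

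The only genuinely substantive point is this last one: passing from ``$P_\beta R_{P_\beta}$ is nilpotent'' to ``one element of $R\setminus P_\beta$ annihilates a fixed power of $P_\beta$'' is where noetherianness is actually used, through the finite generation of $P_\beta^{N}$. Everything else is formal, and the statement then follows at once from the Remark.
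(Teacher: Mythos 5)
Your proof is correct: both conditions of the Remark are verified (compactness of $Min(R)$ via noetherianness of $Spec(R)$, and existence of an element of $Q_\beta\setminus P_\beta$ for each $\beta$), and the localization step is sound, since $R_{P_\beta}$ has a unique prime ideal, its maximal ideal is nilpotent, and finite generation of $P_\beta^{N}$ lets you clear denominators with a single $s\notin P_\beta$ annihilating $P_\beta^{N}$; incomparability of minimal primes then forces $s\in P_\alpha$ for all $\alpha\neq\beta$. The overall strategy is the same as the paper's (reduce the corollary to the Remark), but your justification of the second condition differs in an interesting way: the paper first passes to $R/N(R)$ ("without loss of generality $R$ is reduced") and then invokes the fact that each minimal prime of a reduced noetherian ring is the annihilator $ann(s_\beta)$ of an element $s_\beta\notin P_\beta$ (i.e.\ that minimal primes are associated primes), whereas you avoid both the reduction to the reduced case and any appeal to associated primes or primary decomposition, getting instead the weaker but sufficient statement $sP_\beta^{N}=0$ with $s\notin P_\beta$ by a direct localization argument. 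Your route is more self-contained and works verbatim in the non-reduced ring; the paper's route is shorter if one takes the associated-prime fact as known, and it yields the slightly stronger conclusion $P_\beta=ann(s_\beta)$, which the paper also wants for the "in particular" part of the Remark in the reduced case.
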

\begin{proof}
First note that $Spec(R)$ is noetherain, since $R$ is noetherian. Therefore $Min(R)$ is compact. Without loss of generality we may assume that $R$ is a reduced ring. Let $Min(R)=\{P_\alpha\}$, hence for each $\beta$ we conclude that there exist $s_\beta\in R\setminus P_\alpha$ such that $P_\beta=ann(s_\beta)$. Clearly $s_\beta\in Q_\beta$ and therefore by the above remark we are done.
\end{proof}

In this paper all ring are commutative with $1\neq 0$. All subrings, Modules and ring homomorphisms are unital. A subgroup $S$ of $(R,+)$, which is closed under multiplication of $R$ and $1\notin S$ is called Subring-1. Let $\mathcal{S}$ be a family of (certain) subset of a ring $R$, then we say that a family $\mathcal{I}$ of ideals of $R$ has $A$-property for $\mathcal{S}$, if $S\in\mathcal{S}$ is covered by $\mathcal{I}$, then $S\subseteq I$ for some $I\in\mathcal{I}$. In particular, if $\mathcal{S}$ is the family of all ideals of $R$, and $\mathcal{I}$ has $A$-property for $\mathcal{S}$, then we say that $\mathcal{I}$ has $A$-property (for $R$). One can easily see that if $\mathcal{C}$ is a chain of ideals of $R$, then $\mathcal{C}$ has $A$-property for all finitely generated ideals of $R$. When $\mathcal{I}\subseteq Spec(R)$ and $\mathcal{I}$ has $A$-property for $\mathcal{S}$, we say $\mathcal{I}$ has  $P.A$-property for $\mathcal{S}$. Clearly, each finite set of prime ideals of a ring $R$ has $P.A$-property for the set of all subrings$-1$ of $R$ and therefore has $P.A$-property for $R$.\\

A brief outline of this paper is as follow. In the next section we first proved some basic facts for (certain) $\mathcal{A}\subseteq Spec(R)$, that has (not) $P.A$-property. In particular, we prove that each compact set of primes of a zero-dimensional rings has $P.A$-property. We give a characterization of a set $\mathcal{A}$ of noncomparable prime ideals of a ring $R$ which has $P.A$-property by ring homomorphism; in fact it is shown that  if $\mathcal{A}$ has $P.A$-property, then $\mathcal{A}$ is equal to the inverse image of the set of all maximal ideals of a certain ring $T$ under a ring homomorphism from $R$ into $T$. Next, we determine exactly when $\mathcal{A}\subseteq Spec(R)$ has $P.A$-property. In fact, we prove that $\mathcal{A}$ has $P.A$-property if and only if $\mathcal{A}$ is compact (with Zariski Topology) and $\mathcal{A}$ has $P.A$-property for finitely generated ideals. We show that if $K$ is a formally real field and $X$ is a subset of affine space $K^n$, then $\mathcal{M}_X:=\{M_P\ |\ P\in X\}$ has $P.A$-property. We observe that if $\mathcal{A}\subseteq Spec(R)$, where $R$ is a Bezout ring, zero-dimensional ring or $R=C(X)$ for a topological space $X$, then $\mathcal{A}$ has $P.A$-property if and only if $\mathcal{A}$ is compact. We give an example which shows the previous is not true for Prufer domains, even if $\mathcal{A}\subseteq Spec(R)$ is compact. We show that if $R$ is a $QR$-domain, then $\mathcal{A}\subseteq Spec(R)$ has $P.A$-property for $R$ if and only if $\mathcal{A}$ is compact. If $R$ is a Prufer domain and each $\mathcal{A}$ has $P.A$-property, then we show that $R$ is a $QR$-domain. In particular, if $R$ is a Dedekind domain, then each $\mathcal{A}\subseteq Spec(R)$ has $P.A$-property if and only if $R$ is a $QR$-domain (i.e., $R$ has torsion class group). It is shown that if $f:R\rightarrow C(X)$ is a ring homomorphism from a ring $R$ to $C(X)$ and $\mathcal{A}\subseteq Spec(C(X))$ is compact, then $\mathcal{A}'=\{f^{-1}(P)\ |\ P\in\mathcal{A}\}$ has $P.A$-property for $R$. In particular, if $I$ is an ideal of $R$ then $I^e=C(X)$ if and only if $f(I)$ contains a unit of $C(X)$. We also prove some corollaries for $A$-property of a finite/countable set of ideals in infinite artinian/noetherian rings with certain cardinality, related to the results of \cite{mcadm}, \cite{qb} and \cite{shvm}. In particular, if $R$ is a noetherian integral domain with $|R|>2^{\aleph_0}$, then $R$ is a $u$-ring, i.e., each finite set of ideals of $R$ has $A$-property. Finally, we prove Davis avoidance Theorem for valaution domains instead of prime ideals. In fact we show that if $K$ is a field, $V,V_1,\ldots, V_n$ be valuations for $K$ and $x\in K$ and $V[x]\nsubseteq \bigcup_{i=1}^n V_i$, then there exists $v\in V$ such that $v+x\notin \bigcup_{i=1}^n V_i$.

\section{Main Result}

We begin this section by the following immediate application of Krull Maximal Ideal Theorem.

\begin{prop}\label{np1}
Let $R$ be a ring and $I$ be an ideal of $R$.
\begin{enumerate}
\item $V(I)$ and $V_M(I)$ have $P.A$-property for $R$. In particular, if $M$ is a finitely generated $R$-module then $supp(M)$ has $P.A$-property.
\item Let $\mathcal{A}\subseteq Spec(R)$ has not $P.A$-property for $R$, then there exists a prime ideal $Q\notin \mathcal{A}$ such that $\bigcap_{P\in \mathcal{A}} P \subseteq Q$.
\item Let $\mathcal{A}\subseteq Spec(R)$ and $I$ be an ideal of $R$ such that $I\subseteq \bigcup_{P\in \mathcal{A}} P$. Then either there exists a prime ideal $P$ in $\mathcal{A}$ such that $I\subseteq P$ or $\mathcal{A}\subsetneq V(Ann(I))$. Hence in the latter condition $I+Ann(I)$ is a proper ideal of $R$.
\item Let $\mathcal{A}\subseteq Spec(R)$ and $\mathcal{A}'$ be the set of ideals $I$ of $R$ which are contained in $\bigcup_{P\in \mathcal{A}} P$, but $I\nsubseteq P$ for each $P\in \mathcal{A}$. Then either $\mathcal{A}$ has $P.A$-property or $\mathcal{A}\subsetneq V(I')$ where $I'=\sum_{I\in \mathcal{A}'} Ann(I)$.
\item If $R$ is a zero-dimensional ring and $\mathcal{A}\subseteq Spec(R)$ is compact. Then $\mathcal{A}$ has $P.A$-property for $R$.
\end{enumerate}
\end{prop}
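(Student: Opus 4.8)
The plan is to handle the five items in order, relying throughout on the Krull Maximal Ideal Theorem and the elementary fact that an ideal is contained in a prime $P$ iff it is contained in $P$ after localizing, together with standard properties of the functor $V_{\mathcal{A}}(-)$.

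For (1): the set $V(I)$ consists of all primes containing $I$. If an ideal $J$ satisfies $J \subseteq \bigcup_{P \in V(I)} P$, then in particular $J \subseteq \bigcup_{M \in Max(R),\, M \supseteq I} M$; but the maximal ideals containing $I$ are exactly the maximal ideals of $R/I$, so $J$ maps into the union of all maximal ideals of $R/I$, and by Krull's theorem applied to $R/I$ the image of $J$ is contained in some maximal ideal, i.e.\ $J \subseteq M$ for some $M \in V(I)$. The same argument gives the statement for $V_M(I)$ once one notes $V_M(I) = Max(R) \cap V(I)$ is cofinal (in the containment order) among the members of $V(I)$. For the module statement, recall $supp(M) = V(Ann(M))$ when $M$ is finitely generated, so it is a special case.

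For (2)–(4): these are contrapositive repackagings. If $\mathcal{A}$ fails $P.A$-property, pick an ideal $I$ with $I \subseteq \bigcup_{P \in \mathcal{A}} P$ but $I \not\subseteq P$ for every $P \in \mathcal{A}$. Then $\bigcap_{P \in \mathcal{A}} P \supseteq ?$ — more precisely, every $P \in \mathcal{A}$ contains $Ann(I)$: indeed $I \not\subseteq P$ means there is $a \in I \setminus P$, and for any $b \in Ann(I)$ we have $ab = 0 \in P$, forcing $b \in P$. Hence $\mathcal{A} \subseteq V(Ann(I))$. The inclusion is proper because $I \subseteq \bigcup_{P\in\mathcal{A}} P$ but $I$ is not contained in any single member, whereas $V(Ann(I))$ contains primes (e.g.\ minimal primes over $Ann(I)$ inside any $P \in \mathcal{A}$, or more simply any prime containing $I$ itself when $I$ is proper) witnessing strict containment; choosing such a $Q \in V(Ann(I)) \setminus \mathcal{A}$ and noting $\bigcap_{P\in\mathcal{A}} P \supseteq Ann(I)$... — here I must be a little careful: what I actually get is $\bigcap_{P\in\mathcal{A}}P \subseteq Q$ for any prime $Q \supseteq \bigcap_{P\in\mathcal{A}}P$, and since $V(\bigcap_{P\in\mathcal{A}}P) \supsetneq \mathcal{A}$ strictly (because the missing prime $Q$ above lies in it), item (2) follows. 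Item (3) is then the statement for a single offending $I$, and the ``hence'' clause follows since $\mathcal{A} \subsetneq V(Ann(I))$ forces $V(Ann(I))$ to contain a prime not killing all of $I$, so $I + Ann(I) \neq R$. Item (4) is the simultaneous version: summing $Ann(I)$ over all $I \in \mathcal{A}'$ still lands inside every $P \in \mathcal{A}$ by the same annihilator computation, so $\mathcal{A} \subseteq V(I')$, and if $\mathcal{A}$ lacks $P.A$-property this containment is proper by (2).

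For (5): here $R$ is zero-dimensional and $\mathcal{A} \subseteq Spec(R)$ is compact in the Zariski topology; every prime of $R$ is maximal. Suppose $I \subseteq \bigcup_{P \in \mathcal{A}} P$ but $I \not\subseteq P$ for all $P \in \mathcal{A}$; for each $P$ choose $a_P \in I \setminus P$. Since $P$ is maximal (so $R/P$ a field) and $a_P \notin P$, the image of $a_P$ is a unit mod $P$, which in a zero-dimensional ring means $P$ lies in the basic open set $D(a_P) = \{Q : a_P \notin Q\}$ but moreover — using that in a zero-dimensional ring $D(a)$ equals $V(e)$ for an idempotent, hence is clopen — one gets that $\{D(a_P) : P \in \mathcal{A}\}$ is an open cover of $\mathcal{A}$. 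By compactness finitely many suffice: $\mathcal{A} \subseteq D(a_{P_1}) \cup \cdots \cup D(a_{P_n})$, with $a_{P_1}, \dots, a_{P_n} \in I$. Then no prime of $\mathcal{A}$ contains all of $(a_{P_1}, \dots, a_{P_n})$, so $(a_{P_1},\dots,a_{P_n}) \subseteq I$ is a finitely generated subideal not contained in $\bigcup_{P\in\mathcal{A}} P$ — contradicting $I \subseteq \bigcup_{P\in\mathcal{A}}P$? No: I need the finitely generated ideal to still sit inside the union. The right move instead: in a zero-dimensional ring, $I \subseteq \bigcup_{P\in\mathcal{A}}P$ with $\mathcal{A}$ compact forces, via the clopen cover, a finite subcover, and then a pigeonhole/idempotent-decomposition argument shows $I$ is contained in one $P_i$; I expect \textbf{this last step — converting the finite clopen subcover into a single containment} to be the main obstacle, and the key idea is to use the idempotents $e_i$ cutting out $D(a_{P_i})$ to decompose $R \cong \prod R e_i$ and reduce to the case $\mathcal{A} \subseteq D(a_{P_1})$, i.e.\ $a_{P_1}$ is not in any $P \in \mathcal{A}$, whence $a_{P_1}$ is a unit on the corresponding factor and $I \subseteq \bigcup_{P\in\mathcal{A}} P$ collapses to a statement in a ring where one element of $I$ is a unit, giving the contradiction directly.
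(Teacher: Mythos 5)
Your item (1) is fine and is essentially the paper's argument (Krull's theorem, applied after passing to $R/I$), but items (2)--(4) have a genuine gap at the strictness step. You correctly show $Ann(I)\subseteq P$ for every $P\in\mathcal{A}$, hence $\mathcal{A}\subseteq V(Ann(I))$; however, neither of your proposed witnesses proves $\mathcal{A}\subsetneq V(Ann(I))$: a minimal prime over $Ann(I)$ lying inside some $P\in\mathcal{A}$ may itself belong to $\mathcal{A}$, and a prime containing $I$ need not contain $Ann(I)$ at all (in $\mathbb{Z}/6\mathbb{Z}$ with $I=(2)$, $Ann(I)=(3)$, the prime $(2)$ contains $I$ but not $Ann(I)$). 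For (2) the situation is worse: you need a prime \emph{outside} $\mathcal{A}$ containing $\bigcap_{P\in\mathcal{A}}P$, and since $Ann(I)\subseteq\bigcap_{P\in\mathcal{A}}P$, a prime over $Ann(I)$ need not contain the intersection, so the claim that ``the missing prime $Q$ above lies in $V(\bigcap_{P\in\mathcal{A}}P)$'' is unjustified. The repair is exactly the paper's route: $\mathcal{A}\subseteq V(\bigcap_{P\in\mathcal{A}}P)$ (resp. $V(Ann(I))$), and these sets have $P.A$-property by part (1); since $I\subseteq\bigcup_{P\in\mathcal{A}}P$ is contained in the union over the larger set, part (1) produces a prime $Q$ containing $\bigcap_{P\in\mathcal{A}}P$ (resp. $Ann(I)$) with $I\subseteq Q$, and $Q\notin\mathcal{A}$ because $I$ lies in no member of $\mathcal{A}$. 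This one application gives (2), the strict inclusion in (3), the properness of $I+Ann(I)$ (both $I$ and $Ann(I)$ sit inside $Q$ --- note your inference of this ``hence'' clause from mere strictness of $\mathcal{A}\subsetneq V(Ann(I))$ is a non sequitur), and (4), since $I'\subseteq\bigcap_{P\in\mathcal{A}}P\subseteq Q$.

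Item (5) is also not closed: you abandon your first contradiction attempt and then, at the very step you flag as ``the main obstacle,'' the sketch is wrong as stated --- after reducing to $\mathcal{A}\subseteq D(a_{P_1})$, the fact that $a_{P_1}$ avoids every prime of $\mathcal{A}$ does not make it a unit in ``the corresponding factor''; for that you need $a_{P_1}$ to avoid every prime of that factor, which requires first refining the finite clopen cover into a disjoint partition by atoms of the Boolean algebra the sets $D(a_{P_i})$ generate (with that refinement the idempotent argument can indeed be completed, but you did not carry it out). The paper's proof is much shorter and different in spirit: for a zero-dimensional ring $Spec(R)$ is Hausdorff, so the compact set $\mathcal{A}$ is closed, i.e. $\mathcal{A}=V(J)$ for some ideal $J$, and then part (1) applies directly.
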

\begin{proof}
The first part of $(1)$ is clear by Krull Maximal Ideal Theorem. For the final part of $(1)$ note that since $M$ is finitely generated we conclude that $supp(M)=V(Ann(M))$ and therefore by the first part we are done. For $(2)$, since $\mathcal{A}$ has not $P.A$-property, we infer that $\mathcal{A}\subsetneq V(\bigcap_{P\in\mathcal{A}}P)$, by $(1)$. To see $(3)$ assume that for each $P\in\mathcal{A}$, $I\nsubseteq P$. Therefore $Ann(I)\subseteq P$, for each $P\in\mathcal{A}$. Thus $\mathcal{A}\subseteq V(Ann(I))$. Hence by $(1)$ we are done for $(3)$. $(4)$ is clear by $(3)$. Finally for $(5)$, note that in this case $Spec(R)$ is a Hausdorff space, therefore $\mathcal{A}$ is close. Thus by $(1)$, $\mathcal{A}$ has $P.A$-property.
\end{proof}

Let $R$ be a ring a subset $S$ of $R$ is called a multiplicatively closed set if $0\notin X$, $1\in X$ and $X$ is closed under multiplication of $R$. A well known theorem of I.S. Cohen (which is a generalization of Krull Maximal Ideal Theorem) shows that an ideal $I$ of a ring $R$ disjoint from a multiplicatively closed set $X$ of $R$ if and only if there exists a prime ideal $P$ of $R$ which contains $I$ and disjoint from $X$, see \cite[Theorem 1]{kap}. The proof of the following proposition which is an immediate consequences of Cohen Theorem and the structure of (prime) ideals of ring of quotient of $R$ respect to a multiplicatively close sets (see \cite[Sec. 1-4]{kap}) is simple and hence left to the reader.

\begin{prop}\label{p1}
Let $R$ be a ring and $\mathcal{A}\subseteq Spec(R)$. The following are equivalent:
\begin{enumerate}
\item $\mathcal{A}$ has $P.A$-property for $R$.
\item $\mathcal{A}$ has $P.A$-property for $Spec(R)$.
\item $Max(R_X)\subseteq \{P_X\ |\ P\in \mathcal{A}\}$, where $X=R\setminus (\bigcup_{P\in \mathcal{A}}P)$.
\end{enumerate}
\end{prop}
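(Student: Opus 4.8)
The plan is to run the cycle $(1)\Rightarrow(2)\Rightarrow(3)\Rightarrow(1)$, with the set $X=R\setminus\bigl(\bigcup_{P\in\mathcal{A}}P\bigr)$ and its localization $R_X$ as the central object. The first thing to record is that $X$ is a multiplicatively closed set: since every $P\in\mathcal{A}$ is a proper ideal we have $1\in X$ and $0\notin X$, and if $x,y\notin P$ for all $P\in\mathcal{A}$ then, each $P$ being prime, $xy\notin P$ for all $P\in\mathcal{A}$, so $xy\in X$. This innocuous observation is really the crux of the whole proposition, because it is exactly what lets the hypothesis ``$I\subseteq\bigcup_{P\in\mathcal{A}}P$'' be rephrased as ``$I\cap X=\emptyset$'', i.e.\ as ``$IR_X$ is a proper ideal of $R_X$''. (If $\mathcal{A}=\emptyset$ every statement is vacuous or trivial, so assume $\mathcal{A}\neq\emptyset$.)

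Next I would assemble the standard dictionary between $R$ and $R_X$ from \cite[Sec. 1-4]{kap}: the prime ideals of $R_X$ are exactly the $Q_X:=QR_X$ for $Q$ a prime of $R$ disjoint from $X$, i.e.\ for $Q\subseteq\bigcup_{P\in\mathcal{A}}P$; contraction is inverse to extension here, so $Q_X\cap R=Q$ for such $Q$ and in particular $Q_X\neq R_X$. Every $P\in\mathcal{A}$ is such a $Q$, so $\{P_X\ |\ P\in\mathcal{A}\}\subseteq Spec(R_X)$; and by Cohen's theorem \cite[Theorem 1]{kap} an ideal $J$ of $R$ with $J\cap X=\emptyset$ is contained in some prime of $R$ disjoint from $X$.

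With this in hand the implications are short. $(1)\Rightarrow(2)$ is immediate, a prime ideal being in particular an ideal. For $(2)\Rightarrow(3)$: take $\mathfrak{m}\in Max(R_X)$, write $\mathfrak{m}=Q_X$ with $Q$ a prime of $R$ disjoint from $X$; then $Q\subseteq\bigcup_{P\in\mathcal{A}}P$, so $(2)$ applied to the prime $Q$ gives $P\in\mathcal{A}$ with $Q\subseteq P$, whence $Q_X\subseteq P_X\subsetneq R_X$, and maximality of $\mathfrak{m}=Q_X$ forces $\mathfrak{m}=P_X$. For $(3)\Rightarrow(1)$: given an ideal $I$ with $I\subseteq\bigcup_{P\in\mathcal{A}}P$, the ideal $IR_X$ is proper, hence contained in some $\mathfrak{m}\in Max(R_X)$; by $(3)$, $\mathfrak{m}=P_X$ for some $P\in\mathcal{A}$, and contracting $IR_X\subseteq P_X$ back to $R$ gives $I\subseteq P_X\cap R=P$.

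I do not expect a genuine obstacle here — which is why the paper leaves it to the reader — the only place any work hides being the multiplicatively-closed-set observation and the (routine but genuinely needed) invocations of Cohen's theorem and of the extension/contraction bijection for $Spec(R_X)$. If one preferred to sidestep localization, $(2)\Rightarrow(1)$ can also be obtained directly: $I\cap X=\emptyset$ yields, by Cohen's theorem, a prime $Q\supseteq I$ with $Q\cap X=\emptyset$, i.e.\ $Q\subseteq\bigcup_{P\in\mathcal{A}}P$, and then $(2)$ applied to $Q$ finishes it.
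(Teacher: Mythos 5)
Your proof is correct, and it is exactly the argument the paper intends: the paper leaves the proof to the reader, citing Cohen's theorem and the structure of prime ideals of $R_X$, which are precisely the tools you deploy (together with the routine observation that $X$ is multiplicatively closed and that $I\subseteq\bigcup_{P\in\mathcal{A}}P$ is equivalent to $I\cap X=\emptyset$). No gaps; the handling of the trivial case $\mathcal{A}=\emptyset$ and the contraction step $P_X\cap R=P$ are the only points needing care, and you address both.
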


We remind that in fact $Q(R)$ is VNR for a reduced ring $R$ if and only if $Max(Q(R))=\{P_X\ |\ P\in Min(R)\}$, where $X=R\setminus \bigcup_{P\in Min(R)} P$ is the set of regular (nonzero divisors) of $R$. In the following theorem we generalize the previous fact for arbitrary set of incomparable prime ideals of a ring $R$ and show that the $P.A$-property is in fact the Krull Maximal Ideal Theorem.

\begin{thm}
\begin{enumerate}
\item Let $R$ be a ring and $\mathcal{A}\subseteq Spec(R)$ be an incomparable set of primes which has $P.A$-property for $R$. Then there exists a ring $T$ and a ring homomorphism $f:R\rightarrow T$ such that $\mathcal{A}=\{f^{-1}(M)\ |\ M\in Max(T)\}$.
\item Let $R$ and $T$ be rings and $f:R\rightarrow T$ be a ring homomorphism. Then $\mathcal{A}=\{f^{-1}(M)\ |\ M\in Max(T)\}$ has $P.A$-property for $R$ if and only if for each ideal $I$ of $R$ with $I\cap X=\emptyset$, the ideal $I^e$ is proper in $T$, where $X=R\setminus \bigcup_{P\in\mathcal{A}} P$. Moreover in this case $f$ can be extended to a ring homomorphism from $R_X$ to $T$.
\end{enumerate}
\end{thm}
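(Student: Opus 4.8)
The plan is to exploit Proposition~\ref{p1}, which already identifies the $P.A$-property of an $\mathcal{A}\subseteq Spec(R)$ with the inclusion $Max(R_X)\subseteq\{P_X\mid P\in\mathcal{A}\}$, where $X=R\setminus\bigcup_{P\in\mathcal{A}}P$. For part $(1)$, I would simply take $T:=R_X$ and let $f:R\to R_X$ be the canonical localization map. Since $\mathcal{A}$ is an incomparable family, the contraction map $P\mapsto P_X$ is a bijection from $\mathcal{A}$ onto its image, and the image lies inside $Spec(R_X)$; by the correspondence theorem for localizations, $f^{-1}(P_X)=P$ for every $P\in\mathcal{A}$ (each such $P$ meets $X$ trivially). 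Thus $\{f^{-1}(M)\mid M\in Max(R_X)\}\subseteq\mathcal{A}$ always. For the reverse inclusion I would use the $P.A$-property in the form given by Proposition~\ref{p1}: $Max(R_X)\subseteq\{P_X\mid P\in\mathcal{A}\}$, and then contracting back gives $\mathcal{A}\subseteq\{f^{-1}(M)\mid M\in Max(R_X)\}$, provided each $P\in\mathcal{A}$ has $P_X$ maximal. Here incomparability is exactly what I need: if some $P_X$ were not maximal it would sit below a maximal ideal $M$ of $R_X$, and then $f^{-1}(P)\subsetneq f^{-1}(M)$ with both in $\mathcal{A}$, contradicting incomparability. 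So $T=R_X$ works.

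For part $(2)$, I would argue both directions through Cohen's theorem (the generalization of the Krull Maximal Ideal Theorem quoted before the proposition). Suppose first that $\mathcal{A}=\{f^{-1}(M)\mid M\in Max(T)\}$ has $P.A$-property, and let $I\trianglelefteq R$ with $I\cap X=\emptyset$. By Cohen's theorem there is a prime $Q\supseteq I$ with $Q\cap X=\emptyset$, i.e.\ $Q\subseteq\bigcup_{P\in\mathcal{A}}P$; then $Q$ (hence $I$) lies in some $P=f^{-1}(M)\in\mathcal{A}$ by the $P.A$-property, so $f(I)\subseteq f(P)\subseteq M$, and therefore $I^e\subseteq M\subsetneq T$ is proper. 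Conversely, assume every $I$ disjoint from $X$ has proper extension $I^e$. If $J\trianglelefteq R$ satisfies $J\subseteq\bigcup_{P\in\mathcal{A}}P$, then $J\cap X=\emptyset$, so $J^e$ is proper and lies in some $M\in Max(T)$; contracting gives $J\subseteq f^{-1}(J^e)\subseteq f^{-1}(M)\in\mathcal{A}$, which is the $P.A$-property.

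For the final sentence of $(2)$, to extend $f$ to $R_X\to T$ it suffices to check that $f$ sends every element of $X$ to a unit of $T$ and then invoke the universal property of localization. If $s\in X$ and $f(s)$ were not a unit, then $f(s)$ lies in some maximal ideal $M$ of $T$, whence $s\in f^{-1}(M)\in\mathcal{A}$, contradicting $s\in X=R\setminus\bigcup_{P\in\mathcal{A}}P$. Hence $f(X)\subseteq T^{\times}$ and the desired extension $R_X\to T$ exists and is unique.

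The main obstacle I anticipate is purely bookkeeping around the contraction--extension correspondence: making sure that for $P\in\mathcal{A}$ the extended ideal $P^e=P_X$ is genuinely prime in $R_X$ (which needs $P\cap X=\emptyset$, true by definition of $X$ only after noting $P\subseteq\bigcup\mathcal{A}$) and that incomparability of $\mathcal{A}$ survives localization so that the $P_X$ are maximal rather than merely prime. Once that correspondence is set up cleanly, both equivalences in $(2)$ are immediate consequences of Cohen's theorem, and part $(1)$ is just the special case $T=R_X$.
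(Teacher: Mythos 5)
Your proposal is correct and takes essentially the same route as the paper: for (1) you take $T=R_X$ with the canonical map and use incomparability together with the $P.A$-property (via Proposition \ref{p1}) to identify $Max(R_X)$ with $\{P_X\ |\ P\in\mathcal{A}\}$, and for (2) you run the standard contraction--extension argument through the Krull/Cohen theorem and check $f(X)\subseteq U(T)$ to extend $f$ to $R_X$, exactly as the paper does (your detour through Cohen's theorem is harmless, since $I\cap X=\emptyset$ already means $I\subseteq\bigcup_{P\in\mathcal{A}}P$). One small wording caveat: the inclusion $\{f^{-1}(M)\ |\ M\in Max(R_X)\}\subseteq\mathcal{A}$ does not hold ``always''---it needs the $P.A$-property through Proposition \ref{p1}(3)---but since that hypothesis is in force in part (1), your argument stands.
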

\begin{proof}
$(1)$ It suffices to put $T=R_X$, where $X=R\setminus \bigcup_{P\in\mathcal{A}} P$ and $f$ be the natural ring homomorphism from $R$ into $T$. Now note that since $\mathcal{A}$
has $P.A$-property and elements of $\mathcal{A}$ are incomparable, we immediately infer that $Max(T)=\{P_X\ |\ P\in \mathcal{A}\}$, by the structure of prime ideals of $T=R_X$. It is clear that for each $P\in\mathcal{A}$ we have $f^{-1}(P_X)=P$ which complete the proof of $(1)$.\\
$(2)$ For the if part, let $I$ be an ideal of $R$ which is contained in the union of element of $\mathcal{A}$. Then $I\cap X$ is empty and therefore $I^e$ is a proper ideal of $T$. Hence by Krull Maximal Ideal Theorem $I^e$ is contained in a maximal ideal $M$ of $T$. Thus $I\subseteq f^{-1}(M)\in\mathcal{A}$. Conversely, assume that $\mathcal{A}$ has $P.A$-property for $R$ and $I$ be an ideal of $R$ with $I\cap X$ is empty. Thus $I$ is contained in the union of element of $\mathcal{A}$ and therefore by assumption $I$ is contained in $f^{-1}(M)$ for some $M\in Max(T)$. Hence $I^e$ is contained in $M$ and therefore $I^e$ is proper ideal. Finally, for each $t\in X$, we have $f(t)\notin \bigcup_{M\in Max(T)} M$ and therefore $f(t)\in U(T)$. This immediately shows that $f$ can be extended to a ring homomorphism from $R_X$ to $T$.
\end{proof}

Now we have the following result.

\begin{prop}\label{p2}
Let $f$ be a ring homomorphism from a ring $R$ to a ring $T$ and $\mathcal{A}$ be a set of ideals of $T$ which has $A$-property for subrings$-1$ of $T$. Then $\mathcal{A}':=\{Q^c=f^{-1}(Q)\ |\ Q\in \mathcal{A}\}$ has $A$-property for subrings$-1$ of $R$.
\end{prop}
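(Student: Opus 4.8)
The plan is to transport the covering along $f$, apply the $A$-property in $T$, and pull the conclusion back. So suppose $S$ is a subring$-1$ of $R$ with $S\subseteq\bigcup_{Q\in\mathcal{A}}f^{-1}(Q)$; I want to find $Q\in\mathcal{A}$ with $S\subseteq f^{-1}(Q)$. The natural object to feed into the $A$-property of $\mathcal{A}$ is the image $f(S)$, so first I would record the routine facts: since $f$ is a ring homomorphism and $S$ is a subgroup of $(R,+)$ closed under multiplication, $f(S)$ is a subgroup of $(T,+)$ closed under the multiplication of $T$; and $f(S)\subseteq\bigcup_{Q\in\mathcal{A}}Q$, because any $t=f(s)\in f(S)$ satisfies $s\in f^{-1}(Q)$ for some $Q\in\mathcal{A}$ by hypothesis, hence $t=f(s)\in Q$.

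The one point that needs attention — and the closest thing to an obstacle — is that $f(S)$ need not be a subring$-1$ of $T$: it might contain $1_T$. I would dispose of this case at the outset. If $1_T\in f(S)$, pick $s_0\in S$ with $f(s_0)=1_T$; since $s_0\in S$, there is $Q_0\in\mathcal{A}$ with $s_0\in f^{-1}(Q_0)$, so $1_T=f(s_0)\in Q_0$ and therefore $Q_0=T$. Then $f^{-1}(Q_0)=R$ is a member of $\mathcal{A}'$ containing $S$, and we are done. Hence from now on we may assume $1_T\notin f(S)$, so that $f(S)$ is genuinely a subring$-1$ of $T$.

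Now the hypothesis that $\mathcal{A}$ has $A$-property for subrings$-1$ of $T$ applies to $f(S)$: since $f(S)$ is a subring$-1$ of $T$ covered by $\bigcup_{Q\in\mathcal{A}}Q$, there exists $Q\in\mathcal{A}$ with $f(S)\subseteq Q$. Pulling back through $f$ gives $S\subseteq f^{-1}(f(S))\subseteq f^{-1}(Q)$, and $f^{-1}(Q)\in\mathcal{A}'$, which is precisely the required conclusion. This finishes the argument; no step requires a genuine computation, and the only subtlety, that the image of a subring$-1$ may contain $1_T$, is absorbed into the trivial case in which some member of $\mathcal{A}$ is the whole ring $T$.
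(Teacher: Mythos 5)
Your proof is correct and follows essentially the same route as the paper: push the cover forward to $f(S)$, apply the $A$-property of $\mathcal{A}$ in $T$, and pull the containment back through $f$. You are in fact slightly more careful than the paper, which simply asserts that $f(S)$ is a subring$-1$ of $T$; your separate treatment of the case $1_T\in f(S)$ (which forces some $Q\in\mathcal{A}$ to be all of $T$, so that $f^{-1}(Q)=R\supseteq S$) closes that small gap.
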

\begin{proof}
Assume that $S$ be a subring$-1$ of $R$ which is contained in $\bigcup_{Q\in \mathcal{A}} Q^c$. Therefore we infer that $f(S)$ is contained in $\bigcup_{Q\in \mathcal{A}}Q$. Now note that $f(S)$ is a subring$-1$ of $T$. Hence by our assumption we infer that there exists a $Q\in \mathcal{A}$ such that $f(S)\subseteq Q$. Therefore $S\subseteq Q^c$ and we are done.
\end{proof}

In the next result, we determine exactly when $\mathcal{A}\subseteq Spec(R)$ has $P.A$-property.

\begin{thm}\label{p3}
Let $R$ be a ring and $\mathcal{A}\subseteq Spec(R)$. The following conditions are equivalent:
\begin{enumerate}
\item $\mathcal{A}$ is compact and has $P.A$-property for all finitely generated ideals of $R$.
\item $\mathcal{A}$ has $P.A$-property for $R$.
\end{enumerate}
\end{thm}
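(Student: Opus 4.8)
The plan is to prove both implications straight from the definitions, the only substantive input being the elementary observation that an ideal is the union of its finitely generated subideals --- equivalently, that each element of a sum $\sum_\lambda I_\lambda$ already lies in a finite subsum $I_{\lambda_1}+\cdots+I_{\lambda_n}$. No localization or Cohen-type machinery is needed for this particular statement.

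For $(2)\Rightarrow(1)$ I would first note that the $P.A$-property for finitely generated ideals is a special case of the $P.A$-property for $R$, so the only thing left to check is compactness of $\mathcal{A}$ in the Zariski topology. To that end I would take an arbitrary family $\{I_\lambda\}_{\lambda\in\Lambda}$ of ideals of $R$ with $\bigcap_{\lambda}V_{\mathcal{A}}(I_\lambda)=\emptyset$ and produce a finite subfamily with empty intersection; passing to complements, this is exactly the finite-subcover condition. Setting $I=\sum_{\lambda}I_\lambda$, a prime contains $I$ iff it contains every $I_\lambda$, so $V_{\mathcal{A}}(I)=\bigcap_\lambda V_{\mathcal{A}}(I_\lambda)=\emptyset$; thus no member of $\mathcal{A}$ contains $I$, and the $P.A$-property forces $I\nsubseteq\bigcup_{P\in\mathcal{A}}P$. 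Picking $a\in I$ with $a\notin P$ for all $P\in\mathcal{A}$, and writing $a$ as an element of a finite subsum $I_{\lambda_1}+\cdots+I_{\lambda_n}=:J$, we get $J\nsubseteq P$ for every $P\in\mathcal{A}$, hence $\bigcap_{i=1}^{n}V_{\mathcal{A}}(I_{\lambda_i})=V_{\mathcal{A}}(J)=\emptyset$, the desired finite subfamily. (In particular this recovers the ``$\mathcal{A}$ compact'' half asserted in the abstract.)

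For $(1)\Rightarrow(2)$ I would argue by contradiction. Let $I$ be an ideal with $I\subseteq\bigcup_{P\in\mathcal{A}}P$ but $I\nsubseteq P$ for every $P\in\mathcal{A}$, and for each $P\in\mathcal{A}$ choose $a_P\in I\setminus P$. Then $\{\,V_{\mathcal{A}}((a_P))^c : P\in\mathcal{A}\,\}$ is an open cover of $\mathcal{A}$, since $P\in V_{\mathcal{A}}((a_P))^c$; by compactness there are $P_1,\dots,P_n$ with $\bigcap_{i=1}^{n}V_{\mathcal{A}}((a_{P_i}))=\emptyset$. Now $J:=(a_{P_1},\dots,a_{P_n})$ is a finitely generated ideal with $J\subseteq I\subseteq\bigcup_{P\in\mathcal{A}}P$, while a prime contains $J$ iff it contains all the $a_{P_i}$, so $V_{\mathcal{A}}(J)=\bigcap_{i=1}^{n}V_{\mathcal{A}}((a_{P_i}))=\emptyset$; that is, $J$ is covered by the union of $\mathcal{A}$ yet contained in no member of $\mathcal{A}$, contradicting the $P.A$-property of $\mathcal{A}$ for finitely generated ideals. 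Hence some $P\in\mathcal{A}$ contains $I$, so $\mathcal{A}$ has the $P.A$-property for $R$.

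I do not expect a genuine obstacle here: the whole argument is a translation of ``an arbitrary ideal is a directed union of its finitely generated subideals'' into the topological statement that a family of closed sets $V_{\mathcal{A}}(I_\lambda)$ with empty total intersection already has a finite subfamily with empty intersection, together with the dual move in the other direction. The only points needing a little care are getting the direction of the open-cover / finite-intersection-property correspondence right, and the (harmless) use of choice to select the witnesses $a_P$; both are entirely routine.
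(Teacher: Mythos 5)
Your proposal is correct and follows essentially the same route as the paper's own proof: the $(1)\Rightarrow(2)$ direction uses the identical choice of witnesses $a_P\in I\setminus P$ to build an open cover and extract a finitely generated subideal $J\subseteq I$ violating the hypothesis, and the $(2)\Rightarrow(1)$ direction is the paper's argument recast in the finite-intersection-property formulation, passing from $\sum_\lambda I_\lambda\nsubseteq\bigcup_{P\in\mathcal{A}}P$ to a finite subsum via a single element. No substantive difference.
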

\begin{proof}
$(1)\Rightarrow (2)$: Let $I$ be an ideal of $R$ which is contained in the union of $\mathcal{A}=\{P_{\alpha}\}_{\alpha\in \Gamma}$ but for each $\alpha\in\Gamma$, $I$ is not contained in $P_{\alpha}$. Therefore for each $\alpha\in\Gamma$, there exists $x_{\alpha}\in I\setminus P_{\alpha}$. Hence $P_{\alpha}\in V_{\mathcal{A}}(x_{\alpha})^c$ for each $\alpha\in\Gamma$. Thus the collection $\{V_{\mathcal{A}}(x_{\alpha})^c\}_{\alpha\in\Gamma}$ is an open cover for $\mathcal{A}$. Now by our assumption $\mathcal{A}$ is compact, and therefore there exist finitely many $\alpha_1,\ldots,\alpha_n$ in $\Gamma$ such that $\mathcal{A}=V_{\mathcal{A}}(x_{\alpha_1})^c\cup\cdots\cup V_{\mathcal{A}}(x_{\alpha_n})^c$. Thus $V_{\mathcal{A}}(x_{\alpha_1},\ldots,x_{\alpha_n})=\emptyset$. But the finitely generated ideal $J=<x_{\alpha_1},\ldots,x_{\alpha_n}>$ of $R$ is contained in $I$ and therefore in the union of $\mathcal{A}$, which by $(1)$ immediately implies that there exists $\alpha\in\Gamma$ such that $J\subseteq P_{\alpha}$, i.e., $P_{\alpha}\in V_{\mathcal{A}}(x_{\alpha_1},\ldots,x_{\alpha_n})$, which is a contradiction.\\
$(2)\Rightarrow (1)$: It suffices to show that $\mathcal{A}$ is compact. Hence assume that $\mathcal{A}=\bigcup_{\alpha\in\Gamma}V_{\mathcal{A}}(I_{\alpha})^c$, where each $I_\alpha$ is an ideal of $R$. Therefore $V_{\mathcal{A}}(\sum_{\alpha\in\Gamma} I_{\alpha})=\emptyset$. Thus by $(2)$ we conclude that $\sum_{\alpha\in\Gamma} I_{\alpha}\nsubseteq \bigcup_{P\in \mathcal{A}}P$. This immediately implies that there exist finitely many $\alpha_1,\ldots,\alpha_n$ in $\Gamma$ such that $I_{\alpha_1}+\cdots +I_{\alpha_n}\nsubseteq \bigcup_{P\in \mathcal{A}}P$. Therefore $V_{\mathcal{A}}(I_{\alpha_1}+\cdots +I_{\alpha_n})=\emptyset$, i.e., $\mathcal{A}$ is compact and hence we are done.
\end{proof}

Now we give some conclusions of the above theorem.

\begin{cor}\label{p4}
Let $R$ be a ring and $\mathcal{A}\subseteq Spec(R)$ be a chain. Then $\mathcal{A}$ is compact if and only if $\mathcal{A}$ has $P.A$-property. In particular, if $\mathcal{A}$ is compact then $\bigcup_{P\in \mathcal{A}} P\in \mathcal{A}$.
\end{cor}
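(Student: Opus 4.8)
The plan is to use Theorem~\ref{p3}, which reduces the claim to two separate tasks: showing that a chain $\mathcal{A}$ of primes which is compact has $P.A$-property for finitely generated ideals, and showing that compactness forces $\bigcup_{P\in\mathcal{A}}P\in\mathcal{A}$. Since one direction (``$P.A$-property $\Rightarrow$ compact'') is already contained in Theorem~\ref{p3}, only the converse needs work. So first I would assume $\mathcal{A}=\{P_\alpha\}$ is a chain that is compact in the Zariski topology, and I would prove the ``in particular'' clause first, because it will do most of the heavy lifting.

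To see that $\bigcup_{P\in\mathcal{A}}P$ lies in $\mathcal{A}$, set $Q=\bigcup_{P\in\mathcal{A}}P$; since $\mathcal{A}$ is a chain, $Q$ is an ideal, and in fact a prime ideal (an increasing union of primes is prime). If $Q\notin\mathcal{A}$, then for every $\alpha$ we have $P_\alpha\subsetneq Q$, so there is $x_\alpha\in Q\setminus P_\alpha$. The sets $V_{\mathcal{A}}(x_\alpha)^c=\{P\in\mathcal{A}\mid x_\alpha\notin P\}$ form an open cover of $\mathcal{A}$: indeed, any $P_\beta\in\mathcal{A}$ satisfies $x_\beta\notin P_\beta$, so $P_\beta\in V_{\mathcal{A}}(x_\beta)^c$. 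By compactness finitely many suffice, say $V_{\mathcal{A}}(x_{\alpha_1})^c,\dots,V_{\mathcal{A}}(x_{\alpha_n})^c$ cover $\mathcal{A}$; since $\mathcal{A}$ is a chain there is a largest among $P_{\alpha_1},\dots,P_{\alpha_n}$, say $P_{\alpha_k}$, and then $x_{\alpha_1},\dots,x_{\alpha_n}\in P_{\alpha_k}$ would force $P_{\alpha_k}\notin\bigcup_j V_{\mathcal{A}}(x_{\alpha_j})^c$, a contradiction. Hence $Q\in\mathcal{A}$, and $Q$ is the largest element of the chain.

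Now the $P.A$-property for a finitely generated ideal $J=\langle a_1,\dots,a_m\rangle$ follows easily: if $J\subseteq\bigcup_{P\in\mathcal{A}}P=Q$ and $Q\in\mathcal{A}$, then immediately $J\subseteq Q$ with $Q\in\mathcal{A}$. (Actually this argument works for \emph{any} ideal $I\subseteq\bigcup_{P\in\mathcal{A}}P$, so once we know $Q\in\mathcal{A}$ the full $P.A$-property is immediate without even invoking Theorem~\ref{p3}; but citing Theorem~\ref{p3} keeps the exposition uniform.) Combining with the forward implication of Theorem~\ref{p3} (or simply noting $P.A$-property $\Rightarrow$ compact, which is the easy half there) gives the stated equivalence.

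I expect the only genuine subtlety to be the step where compactness is invoked: one must be careful that $\{V_{\mathcal{A}}(x_\alpha)^c\}$ is genuinely an open \emph{cover} — this uses precisely that each $x_\alpha$ was chosen outside $P_\alpha$ — and that the chain hypothesis lets us replace a finite subcover by a single set, which is exactly where ``$\mathcal{A}$ is a chain'' (rather than an arbitrary compact subset) is used. The primality of the union $\bigcup P_\alpha$ is a standard fact about increasing unions of primes in a totally ordered family and needs only a one-line justification.
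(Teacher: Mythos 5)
There is a genuine flaw in the key compactness step of your argument for the ``in particular'' clause. After extracting a finite subcover $V_{\mathcal{A}}(x_{\alpha_1})^c,\dots,V_{\mathcal{A}}(x_{\alpha_n})^c$ you take the largest prime $P_{\alpha_k}$ among $P_{\alpha_1},\dots,P_{\alpha_n}$ and assert $x_{\alpha_1},\dots,x_{\alpha_n}\in P_{\alpha_k}$. This is unjustified, and for $j=k$ it is outright false: $x_{\alpha_k}$ was chosen in $Q\setminus P_{\alpha_k}$, so it cannot lie in $P_{\alpha_k}$. More generally, $x_{\alpha_j}\in Q=\bigcup_{\beta}P_\beta$ only guarantees that $x_{\alpha_j}$ lies in \emph{some} member of the chain, which may sit strictly above every $P_{\alpha_i}$; nothing forces it into $P_{\alpha_k}$. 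So the contradiction you aim for does not materialize as written. The repair is easy but uses a different prime: for each $j$ pick $P_{\beta_j}\in\mathcal{A}$ with $x_{\alpha_j}\in P_{\beta_j}$, and since $\mathcal{A}$ is a chain and there are finitely many indices, the largest of $P_{\beta_1},\dots,P_{\beta_n}$ contains all the $x_{\alpha_j}$ and hence lies in no $V_{\mathcal{A}}(x_{\alpha_j})^c$, contradicting that these sets cover $\mathcal{A}$.

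It is also worth noting that the route intended by the paper is shorter and avoids this delicate point altogether: as remarked in the introduction, any chain of ideals automatically has $A$-property for finitely generated ideals (each of the finitely many generators lies in some member of the chain, and the largest of those members contains them all), so Theorem~\ref{p3} immediately yields that for a chain $\mathcal{A}$ compactness is equivalent to the $P.A$-property; the ``in particular'' statement then follows by applying the $P.A$-property to the ideal $\bigcup_{P\in\mathcal{A}}P$ itself, which is contained in the union and hence in some $P\in\mathcal{A}$, forcing equality. Your strategy of proving $\bigcup_{P\in\mathcal{A}}P\in\mathcal{A}$ first and deducing the $P.A$-property from it is a legitimate alternative (and, once the cover argument is fixed as above, it gives the full $P.A$-property directly without Theorem~\ref{p3}), but as submitted the decisive step fails.
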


\begin{exm}
\begin{enumerate}
\item let $K$ be a field and $R=K[x_1,x_2,\ldots]$ be the ring of polynomials of independent variables $x_1,x_2,\ldots$ over $K$. Then clearly $I=(x_1,x_2,\ldots)$ is
   contained in the union of primes ideals $P_n=(x_1,x_2,\ldots,x_n)$, but $I$ is not contained in $P_n$ for each $n$.
\item Let $V$ be a valuation ring, $\mathcal{A}$ a set of prime ideals of $V$ and $I$ an ideal of $V$ which is contained in $\bigcup \mathcal{A}$ but is not contained in any element of $\mathcal{A}$, then $I=\bigcup\mathcal{A}$ and therefore $I$ is prime.
\end{enumerate}
\end{exm}

The following is similar to \cite[Proposition 2.5]{shvm}.

\begin{cor}
Let $R$ be a ring such that there exists an uncountable family $\{t_{\alpha}\}_{\alpha\in\Gamma}$
of elements of $R$ such that for each $\alpha\neq \beta$ in $\Gamma$ we have $t_{\alpha}-t_{\beta}\in U(R)$.
If $\mathcal{A}$ is a countable subset of $Spec(R)$, then $\mathcal{A}$ has $P.A$-property for $R$ if and only $\mathcal{A}$ is compact.
\end{cor}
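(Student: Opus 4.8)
The plan is to invoke Theorem \ref{p3}, which reduces everything to the finitely generated case. Indeed, that theorem says that $\mathcal{A}$ has $P.A$-property for $R$ if and only if $\mathcal{A}$ is compact and has $P.A$-property for all finitely generated ideals of $R$; so one implication of the corollary ($P.A$-property $\Rightarrow$ compact) is immediate, and for the converse, assuming $\mathcal{A}=\{P_n\}_{n\in\mathbb{N}}$ is compact, it suffices to show that $\mathcal{A}$ has $P.A$-property for finitely generated ideals.

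So let $J=(a_1,\dots,a_m)$ be a finitely generated ideal with $J\subseteq\bigcup_{n}P_n$, and suppose toward a contradiction that $J\nsubseteq P_n$ for every $n$. First I would note that distinct members of the family $\{t_\alpha\}_{\alpha\in\Gamma}$ are genuinely distinct elements of $R$ (if $t_\alpha=t_\beta$ with $\alpha\neq\beta$ then $t_\alpha-t_\beta=0\notin U(R)$), and then consider the family of elements of $J$ given by $c_\alpha:=\sum_{i=1}^{m}t_\alpha^{\,i-1}a_i$ for $\alpha\in\Gamma$. Since each $c_\alpha$ lies in $\bigcup_n P_n$ and there are only countably many $P_n$, while $\Gamma$ is uncountable, some single prime $P_{n_0}\in\mathcal{A}$ must contain $c_\alpha$ for uncountably many $\alpha$; in particular we may pick $m$ distinct indices $\alpha_1,\dots,\alpha_m\in\Gamma$ with $c_{\alpha_j}\in P_{n_0}$ for $j=1,\dots,m$.

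The key step is then a Vandermonde argument carried out over $R$. The $m\times m$ matrix $\bigl(t_{\alpha_j}^{\,i-1}\bigr)_{1\le j,i\le m}$ has determinant $\prod_{1\le j<k\le m}\bigl(t_{\alpha_k}-t_{\alpha_j}\bigr)$, which is a product of units of $R$ and hence a unit; consequently the matrix is invertible over $R$. Writing the $m$ relations $c_{\alpha_j}=\sum_i t_{\alpha_j}^{\,i-1}a_i$ in matrix form and multiplying by the inverse expresses each generator $a_i$ as an $R$-linear combination of $c_{\alpha_1},\dots,c_{\alpha_m}$; since all of these lie in the prime ideal $P_{n_0}$, so does each $a_i$, and therefore $J\subseteq P_{n_0}$, a contradiction. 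Hence $\mathcal{A}$ has $P.A$-property for finitely generated ideals, and Theorem \ref{p3} completes the proof. I do not expect a genuine obstacle here; the only points requiring care are that the family $\{c_\alpha\}$ is truly uncountable (so that the pigeonhole step yields at least $m$ distinct indices in one $P_{n_0}$), which is exactly why the $t_\alpha$ are pairwise distinct, and that the Vandermonde determinant formula is a polynomial identity valid over any commutative ring, so it applies here verbatim.
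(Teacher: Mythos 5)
Your proof is correct, and its skeleton is exactly the paper's: both directions are funneled through Theorem \ref{p3}, so the whole corollary reduces to showing that the countable family $\mathcal{A}$ has $P.A$-property for finitely generated ideals. The only difference is how that remaining step is discharged. The paper simply quotes \cite[Proposition 2.5]{shvm} for it, while you reprove that ingredient from scratch: pigeonhole the uncountable family $c_\alpha=\sum_i t_\alpha^{\,i-1}a_i$ into the countably many primes to get $m$ distinct indices landing in one $P_{n_0}$, then invert the Vandermonde matrix $\bigl(t_{\alpha_j}^{\,i-1}\bigr)$, whose determinant $\prod_{j<k}(t_{\alpha_k}-t_{\alpha_j})$ is a unit, to pull each generator $a_i$ into $P_{n_0}$. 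All the small points you flag (distinctness of the $t_\alpha$, validity of the Vandermonde identity over any commutative ring, invertibility of a matrix with unit determinant) are fine. What your route buys is self-containedness, and in fact slightly more generality, since your argument never uses that the members of $\mathcal{A}$ are prime --- it proves countable avoidance of finitely generated ideals by an arbitrary countable family of ideals in such a ring; the paper's version is shorter but rests on the external reference.
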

\begin{proof}
By \cite[Proposition 2.5]{shvm}, $\mathcal{A}$ has $P.A$-property for finitely generated ideals of $R$. Hence by Theorem \ref{p3}, we infer that $\mathcal{A}$
has $P.A$-property for $R$ if and only if $\mathcal{A}$ is compact.
\end{proof}

\begin{cor}
Let $R$ be a zero-dimensional ring (in particular, if $R$ is VNR) and $\mathcal{A}\subseteq Spec(R)$. Then $\mathcal{A}$ has $P.A$-property if and only if $\mathcal{A}$ is compact (closed, i.e., $\mathcal{A}=V(I)$ for some ideal $I$ of $R$).
\end{cor}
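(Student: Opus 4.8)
The plan is to assemble the corollary from two facts already established, together with one standard topological observation, so no new computation is really needed.

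First I would record that the implication ``$\mathcal{A}$ has $P.A$-property $\Rightarrow$ $\mathcal{A}$ is compact'' holds for \emph{every} ring: it is precisely the implication $(2)\Rightarrow(1)$ of Theorem \ref{p3}, whose proof (covering $\mathcal{A}$ by basic opens $V_{\mathcal{A}}(I_\alpha)^c$, translating the cover into $V_{\mathcal{A}}(\sum_\alpha I_\alpha)=\emptyset$, and using $P.A$ to reduce the sum to a finite subsum) never used any hypothesis on $R$. So the content left to prove is the converse direction in the zero-dimensional setting.

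Next I would invoke Proposition \ref{np1}(5) verbatim: when $R$ is zero-dimensional and $\mathcal{A}\subseteq Spec(R)$ is compact, $\mathcal{A}$ has $P.A$-property. (The VNR case is subsumed since a von Neumann regular ring is zero-dimensional.) Combining this with the previous paragraph yields the equivalence ``$\mathcal{A}$ has $P.A$-property $\iff$ $\mathcal{A}$ is compact.''

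Finally I would identify compactness with Zariski-closedness for subsets of $Spec(R)$. Since $\dim R=0$, every prime of $R$ is maximal, hence every point of $Spec(R)$ is closed, i.e., $Spec(R)$ is $T_1$; being a spectral space, $T_1$ forces it to be Hausdorff (in fact a Boolean space). As $Spec(R)$ is quasi-compact, in this compact Hausdorff space a subset is compact if and only if it is closed, and a closed subset is by definition of the Zariski topology of the form $V(I)$ for some ideal $I$ of $R$. This gives the parenthetical clause and completes the chain of equivalences. I do not expect a genuine obstacle here; the only point worth stating explicitly is the classical fact that a $T_1$ spectral space is Hausdorff, which is exactly what makes ``compact'' and ``closed'' coincide in this setting — and indeed this observation is already implicit in the proof of Proposition \ref{np1}(5).
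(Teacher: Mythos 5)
Your proposal is correct and follows essentially the same route as the paper: Proposition \ref{np1}(5) for ``compact $\Rightarrow$ $P.A$-property'', Theorem \ref{p3} for the converse, and the Hausdorffness of $Spec(R)$ in the zero-dimensional case to identify compact subsets with closed ones of the form $V(I)$. Your added remark that a $T_1$ spectral space is Hausdorff just makes explicit what the paper leaves implicit; nothing more is needed.
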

\begin{proof}
The if part is evident by $(5)$ of Proposition \ref{np1} and the converse holds by Theorem \ref{p3} (and the fact that $Spec(R)$ is Hausdorff for zero-dimensional rings).
\end{proof}

\begin{cor}
Let $R$ be a Bezout ring (i.e., every finitely generated ideal of $R$ is principal) and $\mathcal{A}\subseteq Spec(R)$.
Then $\mathcal{A}$ has $P.A$-property if and only if $\mathcal{A}$ is compact.
\end{cor}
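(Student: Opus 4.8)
The plan is to reduce everything to Theorem \ref{p3}, which says that $\mathcal{A}$ has $P.A$-property for $R$ precisely when $\mathcal{A}$ is compact \emph{and} has $P.A$-property for all finitely generated ideals of $R$. Granting that, the forward implication here is free: if $\mathcal{A}$ has $P.A$-property, then in particular it is compact, this being the implication $(2)\Rightarrow(1)$ of Theorem \ref{p3} (the Bezout hypothesis plays no role in extracting compactness).

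For the converse I would argue that, in a Bezout ring, the ``finitely generated ideals'' clause of Theorem \ref{p3} is automatically satisfied by \emph{every} $\mathcal{A}\subseteq Spec(R)$. Indeed, let $J$ be a finitely generated ideal of $R$; since $R$ is Bezout, $J=(a)$ is principal. If $(a)\subseteq\bigcup_{P\in\mathcal{A}}P$, then $a\in\bigcup_{P\in\mathcal{A}}P$, so $a\in P$ for some $P\in\mathcal{A}$, and hence $(a)\subseteq P$. Thus $\mathcal{A}$ has $P.A$-property for all finitely generated ideals of $R$, and combining this with the assumed compactness of $\mathcal{A}$, Theorem \ref{p3} yields that $\mathcal{A}$ has $P.A$-property for $R$.

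I do not anticipate any real obstacle: the entire content is the elementary observation that a principal ideal covered by a union of primes already sits inside one of them, so that the $P.A$-property for principal ideals is vacuous, and the Bezout condition is exactly what is needed to push this from principal to finitely generated ideals. The only substantive ingredient is the previously established Theorem \ref{p3}; everything else is a one-line verification.
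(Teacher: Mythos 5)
Your proposal is correct and follows the same route as the paper: reduce to Theorem \ref{p3} and observe that in a Bezout ring every finitely generated ideal is principal, so the $P.A$-property for finitely generated ideals holds automatically (the paper even notes this works for an arbitrary family of ideals, not just primes). No gaps.
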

\begin{proof}
Assume that $\mathcal{A}$ be a set of ideals of $R$ (not necessary prime) and $I$ be a finitely generated ideal of $R$ which is contained in $\bigcup\mathcal{A}$. Since $R$ is a Bezout ring we infer that $I=Ra$ for some $a\in R$. Thus there exists $J$ in $\mathcal{A}$ such that $a\in J$ and therefore $I\subseteq J$. Hence we are done by Theorem \ref{p3}.
\end{proof}

\begin{rem}
The above corollary is still true if $R$ is an almost Bezout domain, see \cite{andzaf}. The proof is similar and need to use \cite[Lemma 3.4]{andzaf}.
\end{rem}

In the next theorem we show that the above corollary does not hold for Prufer domains. We remind the reader that if $R$ is a Prufer domain with quotient field $K$ and $T$ be an overring of $R$, then each prime ideal $Q$ of $T$ has the form $PT$ where $P=Q\cap T$ and in this case $R_P=T_Q$, see \cite[Theorem 26.1]{gilb}. Also we remind the reader that an integral domain $D$ is called $QR$-domain if each overring of $D$ is a quotient of $D$ respect to a multiplicatively closed set of $D$. Finally, note that each $QR$-domain is Prufer, see $\S 27$ of \cite{gilb}.

\begin{thm}
\begin{enumerate}
\item Let $R$ be a Prufer domain with quotient field $K$, $T$ be an overring of $R$ and $\mathcal{A}=\{P\in Spec(R)\ |\ PT\in Max(T) \}$. Then $\mathcal{A}$ is compact.
\item If $R$ is a Prufer domain which is not a $QR$-domain, then there exists a compact set of prime ideals in $R$ which has not $P.A$-property.
\item If $R$ is a $QR$-domain and $\mathcal{A}\subseteq Spec(R)$, then $\mathcal{A}$ has $P.A$-property if and only if $\mathcal{A}$ is compact.
\end{enumerate}
\end{thm}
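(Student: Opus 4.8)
The plan is to treat the three statements in turn, with (2) built on the constructions behind (1) together with the characterization of $QR$-domains.

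For (1), I would use that the contraction map $\varphi\colon Spec(T)\to Spec(R)$, $Q\mapsto Q\cap R$, is continuous for the Zariski topologies, and that $Max(T)$ is quasi-compact for every ring $T$. It then suffices to identify $\mathcal{A}$ with $\varphi(Max(T))$. One direction is immediate from \cite[Theorem 26.1]{gilb}: if $M\in Max(T)$ then $M=PT$ with $P=M\cap R$, so $\varphi(M)=P\in\mathcal{A}$. Conversely, if $P\in\mathcal{A}$ then $PT\in Max(T)$ and $\varphi(PT)=PT\cap R$; since $P\subseteq PT\cap R$ and $R_P=R_{PT\cap R}$ (again by \cite[Theorem 26.1]{gilb}), these two primes coincide in the domain $R$, so $P=\varphi(PT)\in\varphi(Max(T))$. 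Thus $\mathcal{A}$ is a continuous image of a quasi-compact space, hence compact; the only mild care needed is the equality $PT\cap R=P$.

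For (3), the implication ``$P.A$-property $\Rightarrow$ compact'' is exactly $(2)\Rightarrow(1)$ of Theorem \ref{p3}. For the converse I would first show that in a $QR$-domain \emph{every} $\mathcal{A}\subseteq Spec(R)$ has $P.A$-property for finitely generated ideals. Indeed, by $\S 27$ of \cite{gilb} the radical of any finitely generated ideal $I$ of a $QR$-domain equals the radical of a principal ideal, say $\sqrt I=\sqrt{(a)}$; then $a\in\sqrt I$, so $a^n\in I$ for some $n$, and if $I\subseteq\bigcup_{P\in\mathcal{A}}P$ then the single element $a^n$ lies in some $P_0\in\mathcal{A}$, whence $a\in P_0$ and $I\subseteq\sqrt I=\sqrt{(a)}\subseteq P_0$. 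If moreover $\mathcal{A}$ is compact, Theorem \ref{p3} then yields that $\mathcal{A}$ has $P.A$-property. I expect no real difficulty here beyond quoting the $QR$-characterization accurately.

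For (2), since $R$ is not a $QR$-domain, fix an overring $T$ of $R$ that is not a quotient ring of $R$, and put $\mathcal{A}=\{P\in Spec(R)\mid PT\in Max(T)\}$; by (1), $\mathcal{A}$ is compact. I claim $\mathcal{A}$ does not have $P.A$-property. Let $X=R\setminus\bigcup_{P\in\mathcal{A}}P$, a multiplicatively closed set. First, every $t\in X$ misses each $P\in\mathcal{A}$, hence each maximal ideal $PT$ of $T$, so $t\in U(T)$ and therefore $R_X\subseteq T$. Second, $T=\bigcap_{M\in Max(T)}T_M=\bigcap_{P\in\mathcal{A}}R_P$, using $T_{PT}=R_P$ and the bijection $Max(T)\leftrightarrow\mathcal{A}$ from part (1). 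Now if $\mathcal{A}$ had $P.A$-property, then any prime $\mathfrak q$ of $R$ with $\mathfrak q\cap X=\emptyset$ would satisfy $\mathfrak q\subseteq\bigcup_{P\in\mathcal{A}}P$ and hence $\mathfrak q\subseteq P_0$ for some $P_0\in\mathcal{A}$, so $T\subseteq R_{P_0}\subseteq R_{\mathfrak q}$; since $R_X=\bigcap_{\mathfrak q\cap X=\emptyset}R_{\mathfrak q}$, intersecting gives $T\subseteq R_X$. Thus $P.A$-property would force $T=R_X$, a quotient ring of $R$, contradicting the choice of $T$; hence $\mathcal{A}$ is a compact set of primes without $P.A$-property. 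The genuinely new content lies in this last step, and the main thing to get right is the pair of inclusions $R_X\subseteq T$ and (under the $P.A$ hypothesis) $T\subseteq R_X$ — the second exploiting that failure of $P.A$-property for the \emph{ideal} $\mathfrak q$ is exactly the obstruction being ruled out.
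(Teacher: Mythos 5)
Your parts (2) and (3) are essentially the paper's own arguments. For (3) the paper quotes \cite[Theorem 27.5]{gilb} in the form ``there exist $a\in I$ and $n$ with $I^n\subseteq (a)$'' while you use the equivalent radical formulation from $\S 27$; both give the same one-line avoidance step for finitely generated ideals, after which Theorem \ref{p3} finishes. For (2) the paper obtains $T\subseteq R_X$ under the $P.A$-hypothesis by citing \cite[Proposition 4.8]{gilb}, which is exactly your identity $R_X=\bigcap_{\mathfrak q\cap X=\emptyset}R_{\mathfrak q}$ combined with the $P.A$-property applied to the prime ideals $\mathfrak q$ disjoint from $X$; your write-up of this is correct.

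Part (1) is where you genuinely diverge: you realize $\mathcal{A}$ as the continuous image $\varphi(Max(T))$ of the quasi-compact space $Max(T)$, whereas the paper runs a direct open-cover argument ($V_{\mathcal{A}}(\sum I_\alpha)=\emptyset$ forces $(\sum I_\alpha)T=T$, so a finite subsum already extends to $T$). Your route is fine in principle, but it requires the inclusion $\mathcal{A}\subseteq\varphi(Max(T))$, i.e. $PT\cap R=P$ for every $P\in\mathcal{A}$, and your justification of this is the weak point: \cite[Theorem 26.1]{gilb} gives $T_{PT}=R_{PT\cap R}$ and $PT=(PT\cap R)T$, not the equality $R_P=R_{PT\cap R}$ you invoke; trying to deduce $R_P\subseteq T_{PT}$ directly amounts to assuming $R\setminus P\subseteq T\setminus PT$, i.e. $PT\cap R\subseteq P$, which is the very thing to be proved. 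The claim is true and the fix is one line: put $P'=PT\cap R\supseteq P$; since $PT=P'T$, extending to the localization $T_{PT}=R_{P'}$ gives $PR_{P'}=P'R_{P'}$ (both are the maximal ideal), and contracting back to $R$ (extension--contraction fixes primes contained in $P'$) yields $P=P'$. Note that the paper's cover argument never needs this equality --- it only uses the easy direction that every maximal ideal of $T$ contracts into $\mathcal{A}$ --- and that your step in (2), namely $T=\bigcap_{M\in Max(T)}T_M=\bigcap_{P\in\mathcal{A}}R_P$, quietly leans on the same identification (as stated it uses $T_{PT}=R_P$), so either insert the one-line argument above or rephrase these steps in terms of the contractions $M\cap R$; with that repair the whole proposal goes through.
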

\begin{proof}
For $(1)$, let $\mathcal{A}=\bigcup_{\alpha\in\Gamma}V_{\mathcal{A}}(I_\alpha)^c$, where each $I_{\alpha}$ is an ideal of $R$. Hence we infer that $V_{\mathcal{A}}(\sum_{\alpha\in\Gamma} I_{\alpha})=\emptyset$. Now since each maximal ideals of $T$ has the form $PT$ for some $P\in\mathcal{A}$, we conclude that
$(\sum_{\alpha\in\Gamma} I_{\alpha})T=T$. Therefore there exists $\alpha_1,\ldots,\alpha_n$ in $\Gamma$ such that $(I_{\alpha_1}+\cdots+I_{\alpha_n})T=T$. Thus $V_{\mathcal{A}}(I_{\alpha_1}+\cdots+I_{\alpha_n})=\emptyset$ which immediately implies that $\mathcal{A}$ is compact.\\
$(2)$ Assume that $T$ be an overring of $R$ which is not a quotient of $R$ respect to multilplicatively closed subsets of $R$. Suppose $Max(T)=\{ Q=PT\ |\ P\in\mathcal{A}\}$, where $\mathcal{A}\subseteq Spec(R)$ and $X=R\setminus\bigcup_{P\in\mathcal{A}}P$. Hence by the above comment we conclude that

$$R_X\subsetneq T=\bigcap_{Q\in Max(T)} T_Q=\bigcap_{P\in\mathcal{A}} R_P$$

Now if $\mathcal{A}$ has $P.A$-property then by \cite[Proposition 4.8]{gilb}, $T=R_X$ which is absurd. Hence $\mathcal{A}$ is compact by $(1)$ and has not $P.A$-property.\\
Finally for $(3)$, we first remind that if $R$ is a $QR$-domain then for each finitely generated ideal $I$ of $R$, there exists a natural number $n$ and $a\in I$ such that $I^n\subseteq (a)$, see \cite[Theorem 27.5]{gilb}. Now assume that $I$ is a finitely generated ideal of $R$ which is contained in $\bigcup \mathcal{A}$, then by the latter fact we immediately infer that $I$ is contained in an element of $\mathcal{A}$ and therefore by Theorem \ref{p3} we are done.
\end{proof}

\begin{rem}
Let $R$ be a ring. If all subset of $Spec(R)$ has $P.A$-property, then $Spec(R)$ is a noetherian space, by Theorem \ref{p3}. In \cite{smit}, Smith proved a more stronger result: all subset of $Spec(R)$ has $P.A$-property if and only if for each prime ideal $P$ of $R$ there exist $x$ such that $P=\sqrt{(x)}$. As Gilmer mentioned in \cite[Proposition 4]{gilint}, the latter fact is equivalent to: for each ideal $I$ of $R$ there exist $a\in I$ such that $\sqrt{I}=\sqrt{(a)}$ (which immediately implies that $Spec(R)$ is noetherian).
\end{rem}

\begin{cor}\label{nqrd}
\begin{enumerate}
\item Let $R$ be a noetherian $QR$-domain. Then each subset of $Spec(R)$ has $P.A$-property.
\item Let $R$ be a Prufer domain. If each $\mathcal{A}\subseteq Spec(R)$ has $P.A$-property, then $R$ is a $QR$-domain.
\item If $R$ is a Dedekind domain, then each subset of $Spec(R)$ has $P.A$-property if and only if $R$ is a $QR$-domain (if and only if $R$ has torsion class group).
\end{enumerate}
\end{cor}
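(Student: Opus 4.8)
The plan is to obtain all three assertions from the preceding theorem on Prufer and $QR$-domains, together with the observations on Noetherian spectra recorded in the introduction; essentially no new argument is needed.

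For $(1)$, I would first note that since $R$ is Noetherian its spectrum is a Noetherian space (equivalently, $ACC$ holds on radical ideals), so \emph{every} subset $\mathcal{A}\subseteq Spec(R)$ is compact in the Zariski topology. As $R$ is moreover a $QR$-domain, part $(3)$ of the preceding theorem tells us that a subset of $Spec(R)$ has $P.A$-property exactly when it is compact. Combining the two gives that every $\mathcal{A}\subseteq Spec(R)$ has $P.A$-property.

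For $(2)$ I would argue contrapositively: if the Prufer domain $R$ were not a $QR$-domain, then part $(2)$ of the preceding theorem produces a compact $\mathcal{A}\subseteq Spec(R)$ failing $P.A$-property, contradicting the hypothesis; hence $R$ is a $QR$-domain. For $(3)$, recall that a Dedekind domain is simultaneously Noetherian and Prufer: if it is a $QR$-domain, part $(1)$ of this corollary applies directly; conversely, if every subset of $Spec(R)$ has $P.A$-property, then $R$ is Prufer, so part $(2)$ forces $R$ to be a $QR$-domain. Finally, for the equivalence with having torsion ideal class group I would invoke the classical fact (see $\S 27$ of \cite{gilb}) that, for a one-dimensional Noetherian domain, being a $QR$-domain is the same as every ideal having a power that is principal, which is precisely the statement that $[I]$ has finite order in the class group for every ideal $I$.

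The main --- indeed the only --- point requiring attention is to make sure the hypotheses dovetail with the earlier theorem: that ``Noetherian'' yields compactness of \emph{all} subsets of $Spec(R)$, and that ``Dedekind'' simultaneously supplies the Prufer property (needed for $(2)$) and, when the class group is torsion, the Noetherian $QR$-domain property (needed for $(1)$). Everything else is citation of results already available.
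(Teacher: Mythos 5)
Your proofs of (1) and (3) coincide with the paper's: Noetherian implies every subset of $Spec(R)$ is compact, so part (3) of the preceding theorem gives (1), and (3) follows from (1) and (2) since a Dedekind domain is both Noetherian and Prufer (the torsion class group equivalence is, as in the paper, a citation to Gilmer; the paper points to Theorem 40.3 of \cite{gilb} rather than \S 27, but this is immaterial). Where you genuinely diverge is part (2): you argue by contraposition from part (2) of the preceding theorem --- if $R$ were Prufer but not a $QR$-domain, that theorem produces a (compact) subset of $Spec(R)$ without $P.A$-property, contradicting the hypothesis. This is a valid and in fact shorter route, since the hypothesis that \emph{every} subset has $P.A$-property certainly rules out the existence of such a set. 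The paper instead goes through Smith's characterization (quoted in the remark before the corollary, via \cite[Proposition 4]{gilint}): if every subset of $Spec(R)$ has $P.A$-property, then for each ideal $I$ there is $a\in I$ with $\sqrt{I}=\sqrt{(a)}$, whence $I^n\subseteq (a)$ for finitely generated $I$, and then \cite[Theorem 27.5]{gilb} yields that $R$ is a $QR$-domain. Your argument leans on the earlier existence construction (overrings of a non-$QR$ Prufer domain), while the paper's is independent of that construction and exhibits the explicit ideal-theoretic criterion forcing $QR$; both are complete, so this is a matter of economy versus transparency rather than correctness. One cosmetic slip in your (3): $R$ is Prufer because it is Dedekind by hypothesis, not as a consequence of the $P.A$-property, but your intended application of (2) is clearly correct.
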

\begin{proof}
First note that since $R$ is noetherian then each subset of $Spec(R)$ is compact. Thus if $R$ is a $QR$-domain, then by $(3)$ of the previous theorem each subset of $Spec(R)$ has $P.A$-property. Thus $(1)$ holds. For $(2)$, assume that each subset of $Spec(R)$ has $P.A$-property, then by  \cite[Proposition 4]{gilint} (or \cite{smit}), for each ideal $I$ of $R$ there exists $a\in I$ such that $\sqrt{I}=\sqrt{(a)}$. Hence if $I$ is a finitely generated ideal of $R$ we infer that there exists a natural number $n$ such that $I^n\subseteq (a)$ and therefore by \cite[Theorem 27.5]{gilb}, we conclude that $R$ is a $QR$-domain. $(3)$ is evident by $(1)$ and $(2)$. For the parenthesis fact of $(3)$ see \cite[Theorem 40.3]{gilb}.
\end{proof}

\begin{rem}
We remind the reader that the following are equivalent for an integral domain $R$,
\begin{enumerate}
\item $R$ is a $PID$.
\item $R$ is noetherian and each maximal ideal of $R$ is principal.
\item $R$ has $ACC$ on principal ideals and each maximal ideal of $R$ is principal.
\item $R$ is atomic and each maximal ideal of $R$ is principal.
\end{enumerate}
To see this first note that clearly $(1)\Leftrightarrow (2)\Rightarrow (3)\Rightarrow (4)$. Hence it remains to show that $(4)$ implies $(1)$. We may assume that $R$ is not a field. Let $M=(p)$ be an arbitrary maximal ideal of $R$, thus $p$ is a prime element of $R$. We claim that $J:=\bigcap_{n=1}^\infty (p^n)=0$. Suppose that $J\neq 0$, thus by
\cite[Exersice 5, P. 7]{kap}, we infer that $J$ is a prime ideal and $J\subsetneq M$. Since $J$ is a nonzero prime ideal of $R$ and $R$ is atomic, we conclude that there exists an irreducible element $q\in J$. Thus $q\in M=(p)$ and therefore $q=p$, which is absurd. Hence $J=0$. Again by \cite[Exersice 5, P. 7]{kap}, we deduce that $M$ contains no properly nonzero prime ideal (we refer the reader to \cite{andpp} for more interesting results about principal prime ideals in any commutative ring). Hence $R$ is a $PID$.
\end{rem}

\begin{cor}
Let $R$ be an integral domain. Then $R$ is a $PID$ if and only if $R$ is a $UFD$ and the family of all principal prime ideals of $R$ has $P.A$-property.
\end{cor}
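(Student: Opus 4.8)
The plan is to lean on the Remark immediately preceding this Corollary, which gives the characterization: an integral domain is a $PID$ if and only if it is atomic and each of its maximal ideals is principal.

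For the forward implication, suppose $R$ is a $PID$. Then $R$ is a $UFD$ by the classical structure theory. Moreover, in a $PID$ every ideal — hence every prime ideal — is principal, so the family of all principal prime ideals of $R$ is exactly $Spec(R) = V((0))$, which has $P.A$-property for $R$ by part $(1)$ of Proposition \ref{np1}.

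For the converse, assume $R$ is a $UFD$ and that the family $\mathcal{P}$ of all principal prime ideals of $R$ has $P.A$-property. Since a $UFD$ is atomic, by the Remark above it suffices to show that every maximal ideal $M$ of $R$ is principal. If $M = (0)$ then $R$ is a field and there is nothing to prove, so assume $M \neq (0)$. I claim $M \subseteq \bigcup_{P \in \mathcal{P}} P$: if $x \in M$ is nonzero then $x$ is a nonunit (as $M$ is proper), so by unique factorization $x = u\, p_1 \cdots p_k$ with $u \in U(R)$ and each $p_i$ a prime element of $R$, whence $x \in (p_1)$, a principal prime ideal; and $0$ lies in every ideal. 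The $P.A$-property of $\mathcal{P}$ then produces a principal prime $P_0$ with $M \subseteq P_0$, and since $M$ is maximal and $P_0$ is proper, $M = P_0$ is principal. Thus every maximal ideal of $R$ is principal and $R$ is a $PID$.

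The argument is little more than an assembly of earlier facts; the one step deserving attention is the observation that in a $UFD$ every nonzero nonunit — hence every nonzero element of a proper ideal — lies in a principal prime ideal, which is precisely what exhibits a maximal ideal as a set covered by $\mathcal{P}$ and makes the $P.A$-property applicable. This is also why the hypothesis must be ``$UFD$'' rather than merely ``atomic'': it is the $UFD$ property that forces irreducible factors to generate prime ideals.
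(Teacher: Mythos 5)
Your proof is correct and follows essentially the same route as the paper: cover a maximal ideal $M$ by the principal prime ideals via unique factorization, apply the $P.A$-property to get $M$ contained in (hence equal to) a principal prime, and conclude via the preceding Remark that an atomic domain whose maximal ideals are principal is a $PID$. Your extra care with the forward direction (identifying the family with $V((0))$ and citing Proposition \ref{np1}) and with the degenerate case $M=(0)$ only makes explicit what the paper leaves implicit.
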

\begin{proof}
It is clear that if $R$ is a $PID$, then each subset of (prime) ideals of $R$ has $A$-property. Conversely, assume that $R$ is a $UFD$ which the family of all principal prime ideals of $R$ has $P.A$-property. Let $Ir(R)$ be the set of all prime elements of $R$ up to associate. Then clearly $M\subseteq \bigcup_{p\in Ir(R)}(p)$, since $R$ is a $UFD$. Therefore by assumption we conclude that $M\subseteq (p)$, for some $p\in Ir(R)$, therefore $M$ is principal. Thus $R$ is a PID.
\end{proof}

We remind the reader that if $X$ is a completely regular Hausdorff topological space, then $C(X)$ denotes the ring of all continuous real functions on $X$.
For each $x\in X$, let $M_x=\{f\in C(X)\ |\ f(x)=0\}$, it is clear that $C(X)/M_x\cong \mathbb{R}$ as ring and therefore $M_x\in Max(C(X))$, for each $x\in X$.

\begin{cor}\label{t5}
 Let $X$ be a topological space and $\mathcal{A}\subseteq Spec(C(X))$. Then $\mathcal{A}$ has $P.A$-property for finitely generated ideals of $R$. consequently, $\mathcal{A}$ has $P.A$-property if and only if $\mathcal{A}$ is compact. In particular, if $A$ is a subset of $X$ satisfies at least one of the following conditions:
\begin{enumerate}
\item $X$ is compact and $A$ is a closed subset of $X$.
\item $A$ is a compact subset of $X$.
\end{enumerate}
Then $\mathcal{M}_A=\{M_a\ |\ a\in A\}$ has $P.A$-property for $C(X)$.
\end{cor}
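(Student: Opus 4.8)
The plan is to reduce the whole statement to Theorem~\ref{p3}; the one substantive ingredient is a ring-theoretic fact about $C(X)$, namely that every finitely generated ideal of $C(X)$ has the same radical as a principal ideal. Given a finitely generated ideal $I=(f_1,\dots,f_n)$ of $C(X)$, I would set $g:=f_1^2+\cdots+f_n^2\in I$ and prove $\sqrt I=\sqrt{(g)}$. The inclusion $\sqrt{(g)}\subseteq\sqrt I$ is immediate. For the reverse, I would use that $C(X)$ is closed under taking square roots of nonnegative elements: since $g\ge 0$, the composite $h:=\sqrt g$ of $g$ with the continuous map $t\mapsto\sqrt t$ on $[0,\infty)$ lies in $C(X)$ and $h^2=g$. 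For each $i$ let $k_i$ be the function equal to $f_i^2/h$ on $\{h\neq 0\}$ and to $0$ on $\{h=0\}$; the estimate $0\le f_i^2/h\le g/h=h$ forces $k_i\to 0$ at the zeros of $h$, so $k_i\in C(X)$, and $k_i h=f_i^2$ holds everywhere (on $\{h=0\}$ one has $g=0$, hence $f_i=0$). Therefore $f_i^2=k_i h\in(h)=(\sqrt g)\subseteq\sqrt{(g)}$, so $f_i\in\sqrt{(g)}$ and $\sqrt I\subseteq\sqrt{(g)}$. It follows that $I\subseteq P\iff g\in P$ for every prime $P$ of $C(X)$; hence if $I\subseteq\bigcup_{P\in\mathcal A}P$ then $g\in\bigcup_{P\in\mathcal A}P$, so $g\in P$ for some $P\in\mathcal A$ and thus $I\subseteq P$. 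This is exactly the $P.A$-property for finitely generated ideals.

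Given this, the equivalence ``$\mathcal A$ has $P.A$-property $\iff\mathcal A$ is compact'' is immediate from Theorem~\ref{p3}, whose condition $(1)$ is the conjunction of compactness with the $P.A$-property for finitely generated ideals --- and the latter has just been shown to hold automatically in $C(X)$. For the assertions about $\mathcal M_A$, I would consider the map $\varphi\colon X\to Spec(C(X))$, $\varphi(x)=M_x$, and verify that it is continuous for the Zariski topology: for any ideal $J$ of $C(X)$ one has $\varphi^{-1}(V(J))=\{x\in X:\ f(x)=0\text{ for all }f\in J\}=\bigcap_{f\in J}f^{-1}(0)$, which is closed in $X$. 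In case $(2)$, $A$ is compact; in case $(1)$, $A$ is a closed subspace of the compact space $X$, hence again compact. Either way $\mathcal M_A=\varphi(A)$ is the continuous image of a compact space, so it is a compact subspace of $Spec(C(X))$, and the ``consequently'' clause yields that $\mathcal M_A$ has $P.A$-property for $C(X)$.

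The step I expect to take the most care is the continuity of the auxiliary functions $k_i$ in the first paragraph, i.e., that $f_i^2/h$ extends continuously by $0$ across the zero set of $h=\sqrt g$; this is precisely where the special structure of $C(X)$ --- closure under nonnegative square roots, which makes the denominator $h$ dominate the numerator $f_i^2$ --- is used. The rest is bookkeeping: the appeal to Theorem~\ref{p3} and the continuity of $x\mapsto M_x$ are routine.
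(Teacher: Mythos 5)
Your proof is correct, and it follows the paper's overall skeleton (pivot on the element $g=f_1^2+\cdots+f_n^2\in I$, show $g\in P$ forces $I\subseteq P$, invoke Theorem~\ref{p3}, and get the $\mathcal{M}_A$ statement from compactness of a continuous image), but the justification of the key step is genuinely different. The paper disposes of ``$g\in P\Rightarrow f_i\in P$'' in one line by citing Gillman--Jerison (Theorem 5.5): $C(X)/P$ is a totally ordered domain, so a vanishing sum of squares has vanishing summands. You instead prove the stronger ring-theoretic fact $\sqrt{(f_1,\dots,f_n)}=\sqrt{(g)}$ by hand, via the classical square-root trick: $h=\sqrt{g}\in C(X)$, and the estimate $0\le f_i^2/h\le h$ on $\{h\neq 0\}$ makes $f_i^2/h$ extend continuously by $0$, so $f_i^2\in (h)\subseteq\sqrt{(g)}$; your continuity check at zeros of $h$ is the delicate point and it is done correctly. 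The trade-off: the paper's route is shorter but leans on a nontrivial cited theorem about the order structure of $C(X)/P$, while yours is elementary and self-contained and in fact establishes that every finitely generated ideal of $C(X)$ has the same radical as a principal ideal generated by an element of it --- the same kind of condition (cf.\ the Gilmer/Smith remark and the $QR$-domain argument in the paper) that drives Theorem~\ref{p3} elsewhere. For the $\mathcal{M}_A$ part the paper merely asserts compactness is easy; your explicit verification that $x\mapsto M_x$ is Zariski-continuous (preimage of $V(J)$ is $\bigcap_{f\in J}f^{-1}(0)$) and that $\mathcal{M}_A$ is the continuous image of the compact set $A$ is exactly the intended argument, so that portion matches the paper.
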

\begin{proof}
First we remind that for each prime ideal $P$ of $C(X)$, the ring $C(X)/P$ is a totally ordered ring, see \cite[Theorem 5.5]{gljr}. Now let $I=<f_1,\ldots,f_n>$ be a finitely generated ideal of $C(X)$ which is contained in the union of $\mathcal{A}$. Thus there exists a $P\in \mathcal{A}$ such that $f_1^2+\cdots+f_n^2\in P$, which by the previous fact immediately implies that each $f_i\in P$. Therefore $I\subseteq P$. Thus by Theorem \ref{p3}, we conclude that $\mathcal{A}$ has $P.A$-property if and only if $\mathcal{A}$ is compact. Now assume that $(1)$ or $(2)$ holds, then it is obvious that $A$ is compact in case $(1)$. Now one can easily see that $\mathcal{M}_A$ is compact and therefore by the previous part has $P.A$-property.
\end{proof}

\begin{cor}
Let $R$ be a ring, $X$ a topological space and $f:R\rightarrow C(X)$ a ring homomorphism. Then the following hold:
\begin{enumerate}
\item if $\mathcal{A}\subseteq Spec(C(X))$ is compact, then $\mathcal{A}'=\{f^{-1}(P)\ |\ P\in\mathcal{A}\}$ has $P.A$-property for $R$.
\item if $I$ is an ideal of $R$ then $I^e=C(X)$ if and only if $f(I)$ contains a unit of $C(X)$.
\end{enumerate}
\end{cor}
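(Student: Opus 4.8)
The plan is to derive both parts from a single auxiliary fact, which is a mild strengthening of Corollary \ref{t5}: \emph{if $\mathcal{B}\subseteq Spec(C(X))$ is compact, then $\mathcal{B}$ has $A$-property for all subrings$-1$ of $C(X)$}, not only for ideals. Granting this, part $(1)$ is immediate: apply Proposition \ref{p2} with $T=C(X)$ and the given compact $\mathcal{B}=\mathcal{A}$ to conclude that $\mathcal{A}'=\{f^{-1}(P)\mid P\in\mathcal{A}\}$ has $A$-property for subrings$-1$ of $R$. Since every proper ideal of $R$ is a subring$-1$, while the unit ideal can never be contained in a union of prime ideals, this is exactly saying $\mathcal{A}'$ has $P.A$-property for $R$.

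To prove the auxiliary fact I would run the compactness argument already used for Corollary \ref{t5}, but keep track of the fact that only closure under the ring operations is needed. Suppose $S$ is a subring$-1$ of $C(X)$ with $S\subseteq\bigcup_{P\in\mathcal{B}}P$ but $S\nsubseteq P$ for every $P\in\mathcal{B}$. For each $P\in\mathcal{B}$ choose $s_P\in S\setminus P$; then $\{V_{\mathcal{B}}(s_P)^c\}_{P\in\mathcal{B}}$ is a Zariski-open cover of $\mathcal{B}$, so compactness yields $s_1,\dots,s_n\in S$ with $V_{\mathcal{B}}(s_1,\dots,s_n)=\emptyset$. Because $S$ is closed under squaring and addition, $g:=s_1^2+\cdots+s_n^2\in S\subseteq\bigcup_{P\in\mathcal{B}}P$, so $g\in P_0$ for some $P_0\in\mathcal{B}$. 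Now $C(X)/P_0$ is a totally ordered domain (\cite[Theorem 5.5]{gljr}), and $\overline{s_1}^{\,2}+\cdots+\overline{s_n}^{\,2}=\overline{g}=0$ forces each $\overline{s_i}=0$, i.e.\ $s_i\in P_0$ for all $i$; hence $P_0\in V_{\mathcal{B}}(s_1,\dots,s_n)=\emptyset$, a contradiction.

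For part $(2)$, the implication ``$f(I)$ contains a unit $\Rightarrow I^e=C(X)$'' is trivial, since a unit of $C(X)$ lying in $f(I)\subseteq I^e$ forces $1\in I^e$. For the converse I would argue by contraposition: assume $f(I)$ contains no unit of $C(X)$. If $1\in f(I)$ we already have a contradiction, so $f(I)$ is a subring$-1$ of $C(X)$; moreover every element of $f(I)$ is a non-unit and hence lies in some maximal ideal of $C(X)$, so $f(I)\subseteq\bigcup_{M\in Max(C(X))}M$. Since $Max(C(X))$ is quasi-compact in the Zariski topology, the auxiliary fact applies with $\mathcal{B}=Max(C(X))$ and produces a single $M_0\in Max(C(X))$ with $f(I)\subseteq M_0$; then $I^e=f(I)C(X)\subseteq M_0\subsetneq C(X)$, so $I^e\neq C(X)$.

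As for difficulty, everything here is routine once the statement is phrased correctly. The one point worth care is that $f(I)$ need not be an ideal of $C(X)$, so Corollary \ref{t5} as literally stated does not immediately apply; what is needed is its ``subrings$-1$'' version, which is precisely what Proposition \ref{p2} is designed to transport and which the compactness argument above supplies. I do not expect any genuine obstacle.
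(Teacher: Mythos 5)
Your proof is correct, but it is organized differently from the paper's. The paper argues directly on the extended ideal: given an ideal $I$ of $R$ covered by $\mathcal{A}'$, it shows by the sum-of-squares trick (each $f(a_1)^2+\cdots+f(a_n)^2$ lies in some $P\in\mathcal{A}$, and total ordering of $C(X)/P$ forces every $f(a_i)\in P$) that $I^e=f(I)C(X)$ is itself contained in $\bigcup_{P\in\mathcal{A}}P$, and then invokes Corollary \ref{t5} (compact $\mathcal{A}\subseteq Spec(C(X))$ has $P.A$-property, via Theorem \ref{p3}) to place $I^e$, hence $I$, inside a single member; part $(2)$ is then deduced from part $(1)$ applied with $\mathcal{A}=Max(C(X))$. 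You instead isolate a strengthening of Corollary \ref{t5} --- a compact $\mathcal{B}\subseteq Spec(C(X))$ has $A$-property for all subrings$-1$, proved by exactly the same two ingredients (the compactness argument of Theorem \ref{p3} plus total ordering of $C(X)/P$ from \cite[Theorem 5.5]{gljr}) --- and then transport it through $f$ by Proposition \ref{p2}, noting that proper ideals of $R$ are subrings$-1$; your part $(2)$ applies the same lemma directly to the subring$-1$ $f(I)$ inside $\bigcup_{M\in Max(C(X))}M$ rather than passing back through preimages. The mathematical content is the same, but your packaging makes explicit the point that the paper handles by extending to $I^e$, namely that $f(I)$ is not an ideal of $C(X)$ and so Corollary \ref{t5} does not literally apply to it; the subrings$-1$ formulation is precisely the right setting for Proposition \ref{p2}, and as a by-product your lemma slightly strengthens Corollary \ref{t5}. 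One small economy of the paper's route is that it never needs the subring$-1$ notion at all, at the cost of the extra verification that $I^e\subseteq\bigcup_{P\in\mathcal{A}}P$.
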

\begin{proof}
Let $I$ be an ideal of $R$ which is contained in $\bigcup_{P\in \mathcal{A}'}f^{-1}(P)$. Thus $f(I)$ is contained in $S:=\bigcup_{P\in \mathcal{A}} P$.
We claim that $I^e=f(I)C(X)$ is contained in $S$ and therefore is a proper ideal of $C(X)$. If $a_1,\ldots,a_n$ are in $I$, then $f(a_1)^2+\cdots+f(a_n)^2\in f(I)\subseteq S$, thus we conclude that there exists $P\in \mathcal{A}$ such that $f(a_1)^2+\cdots+f(a_n)^2\in P$. Since $C(X)/P$ is a totally ordered ring we infer that $f(a_i)\in P$, for each $i$. Therefore we conclude that $C(X)f(a_1)+\cdots+C(X)f(a_n)\subseteq P$, which shows $I^e\subseteq S$ and hence $I^e$ is proper. Now since $\mathcal{A}$ has $P.A$-property, we deduce that there exists $Q\in\mathcal{A}$ such that $I^e\subseteq Q$ and therefore $I\subseteq f^{-1}(Q)\in\mathcal{A}'$ and hence $(1)$ holds. For $(2)$, if $I^e=C(X)$ but $f(I)$ contains no unit of $C(X)$, then we conclude that $f(I)\subseteq \bigcup_{M\in Max(C(X))}M$. Therefore $I\subseteq \bigcup_{M\in Max(C(X))}f^{-1}(M)$. Thus by part $(1)$, we infer that there exist $M\in Max(C(X))$ such that $I\subseteq f^{-1}(M)$, i.e., $I^e\subseteq M$ which is absurd. Therefore $f(I)$ contains a unit of $C(X)$.
\end{proof}

Let $K$ be a field, $P$ be a point in the affine space $K^n$, and $R=K[x_1,\ldots,x_n]$ be polynomial ring of $n$ variable over $K$. Then $M_P=\{f\in R\ |\ f(P)=0\}$ is a maximal ideal of $R$. We remind that $Spec(R)$ and $K^n$ are noetherian spaces (by Zariski topologies). Now the following is in order.

\begin{cor}
Assume that $K$ be a formally real field and $X$ a subset of affine space $K^n$. Then $\mathcal{M}_X=\{M_P\ |\ P\in X\ \}$ has $P.A$-property for $R=K[x_1,\ldots,x_n]$.
\end{cor}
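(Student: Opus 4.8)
The plan is to reduce everything to Theorem~\ref{p3}. Since $R=K[x_1,\dots,x_n]$ is noetherian by Hilbert's basis theorem, $Spec(R)$ is a noetherian topological space, so \emph{every} subset of $Spec(R)$ is compact; in particular $\mathcal{M}_X$ is compact. Hence, by Theorem~\ref{p3}, it suffices to prove that $\mathcal{M}_X$ has $P.A$-property for all finitely generated ideals of $R$, and the desired $P.A$-property for $R$ follows.

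The crux is the formally real hypothesis, which is used exactly the way "$C(X)/P$ is totally ordered" was used in Corollary~\ref{t5}. Recall the standard fact: if $K$ is formally real and $a_1,\dots,a_m\in K$ satisfy $a_1^2+\cdots+a_m^2=0$, then $a_1=\cdots=a_m=0$; otherwise, dividing by some nonzero $a_k^2$ would exhibit $-1$ as a sum of squares in $K$, contradicting formal reality. This is the only non-formal ingredient.

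So let $I=\langle f_1,\dots,f_m\rangle$ be a finitely generated ideal of $R$ with $I\subseteq\bigcup_{P\in X}M_P$. Set $g=f_1^2+\cdots+f_m^2$, which lies in $I$ because $I$ is an ideal. By hypothesis $g\in M_P$ for some $P\in X$, that is, $g(P)=f_1(P)^2+\cdots+f_m(P)^2=0$ in $K$; by the formally real property each $f_j(P)=0$, so $f_j\in M_P$ for every $j$ and therefore $I\subseteq M_P\in\mathcal{M}_X$. This establishes $P.A$-property for finitely generated ideals, and combined with compactness of $\mathcal{M}_X$ and Theorem~\ref{p3} it completes the proof. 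I do not anticipate any real obstacle here: compactness is automatic from noetherianness of $Spec(R)$ (as already noted in the introduction), and the only point meriting a word of care is that the single element $g$ — a sum of squares of the generators — lies in $I$, which it does precisely because $I$ is an ideal.
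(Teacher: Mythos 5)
Your proposal is correct and rests on the same key idea as the paper's proof: put $g=f_1^2+\cdots+f_m^2\in I$ and use formal reality of $K$ to conclude $f_j(P)=0$ for all $j$, hence $I\subseteq M_P$. The only (immaterial) difference is how the reduction to finitely generated ideals is handled: the paper simply writes an arbitrary ideal of the noetherian ring $R$ with finitely many generators and applies the argument directly, whereas you route through compactness of $\mathcal{M}_X$ and Theorem~\ref{p3}; both reductions come down to the same Hilbert basis fact.
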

\begin{proof}
Let $I=<f_1,\ldots,f_n>$ be an ideal of $R$ which is contained in $\bigcup_{P\in X}M_P$. Now since $f:=f_1^2+\cdots+f_n^2\in I$, we infer that there exists $P\in X$ such that $f\in M_P$. Thus $f(P)=0$ and since $K$ is formally real we immediately conclude that $f_i(P)=0$ for each $i$. Thus $I\subseteq M_P$ and we are done.
\end{proof}

\begin{thm}
Let $R$, $T$ be rings and $f$  a ring homomorphism from $R$ into $T$. If $\mathcal{A}\subseteq Spec(T)$ has $P.A$-property, then $\mathcal{A}'=\{f^{-1}(P)\ |\ P\in\mathcal{A}\}$ is compact. Moreover, if $Im(f)\leq T$ has lying-over (in particular, if $T$ is integral over $Im(f)$), then $\mathcal{M}=\{f^{-1}(M)\ |\ M\in Max(T)\}$ has $P.A$-property for $R$.
\end{thm}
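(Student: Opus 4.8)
The argument separates along the two assertions, and the first is a compactness-transfer along the spectral map. The plan is to observe that $f^{*}\colon Spec(T)\to Spec(R)$, $P\mapsto f^{-1}(P)$, is continuous for the Zariski topologies: for an ideal $I$ of $R$ one has $(f^{*})^{-1}(V(I))=\{P\in Spec(T)\mid f(I)\subseteq P\}=V(f(I)T)$, so preimages of closed sets are closed. Since $\mathcal{A}$ has $P.A$-property, Theorem \ref{p3} (applied to $T$) gives that $\mathcal{A}$ is compact, and then $\mathcal{A}'=f^{*}(\mathcal{A})$ is a continuous image of a compact space, hence compact. If one prefers to stay elementary, the same follows by unwinding definitions: a basic open cover $\mathcal{A}'=\bigcup_{\alpha}V_{\mathcal{A}'}(I_{\alpha})^{c}$ forces the extension $\sum_{\alpha}I_{\alpha}^{e}$ to $T$ to miss every member of $\mathcal{A}$, so the $P.A$-property of $\mathcal{A}$ produces an element of $\sum_{\alpha}I_{\alpha}^{e}$ outside $\bigcup_{P\in\mathcal{A}}P$; that element already lies in $I_{\alpha_{1}}^{e}+\cdots+I_{\alpha_{n}}^{e}$ for finitely many indices, which yields the finite subcover.

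For the second assertion set $R':=Im(f)$, a unital subring of $T$, and let $I$ be an ideal of $R$ with $I\subseteq\bigcup_{M\in Max(T)}f^{-1}(M)$. I must exhibit a single $M\in Max(T)$ with $I\subseteq f^{-1}(M)$, equivalently with $f(I)T$ a proper ideal of $T$. The key step — the one that genuinely uses the hypotheses — is to show first that $f(I)R'$ is proper in $R'$. Suppose not, so $1=\sum_{i}f(b_{i})f(a_{i})=f\bigl(\sum_{i}b_{i}a_{i}\bigr)$ with $a_{i}\in I$ and $b_{i}\in R$, using that $f$ maps onto $R'$. Then $c:=\sum_{i}b_{i}a_{i}\in I$, while $f(c)=1$ lies in no maximal ideal of $T$, contradicting $I\subseteq\bigcup_{M}f^{-1}(M)$. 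It is worth stressing that one cannot shortcut this by noting that $f(I)$ consists of nonunits of $T$ and concluding $f(I)T\neq T$: that implication is false as soon as $T$ has two maximal ideals, so the detour through $R'=Im(f)$, where surjectivity of $f$ is available, is essential.

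With $f(I)R'$ proper, Krull's theorem gives a maximal ideal $N$ of $R'$ containing $f(I)R'$. Here the lying-over hypothesis for $R'\subseteq T$ enters: there is $Q\in Spec(T)$ with $Q\cap R'=N$, whence $f(I)\subseteq N\subseteq Q\subsetneq T$, so $f(I)T$ is proper, and a final application of Krull's theorem embeds it in some $M\in Max(T)$. Then $I\subseteq f^{-1}(M)\in\mathcal{M}$, as required. For the parenthetical case, if $T$ is integral over $Im(f)$ then the lying-over property holds automatically by the Cohen--Seidenberg theorem, so no extra hypothesis is needed. The only delicate point in the whole proof is the one flagged above, namely replacing the naive nonunit argument in $T$ by the corresponding argument inside $Im(f)$ and then transporting the resulting maximal ideal back up to $T$ via lying-over.
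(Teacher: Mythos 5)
Your proof is correct, and in substance it follows the paper's route, with two small deviations worth noting. For the compactness of $\mathcal{A}'$ the paper argues exactly as in your ``elementary'' version: an open cover of $\mathcal{A}'$ gives an ideal $I=\sum_\alpha I_\alpha$ with $V_{\mathcal{A}'}(I)=\emptyset$, hence $I^e\nsubseteq P$ for every $P\in\mathcal{A}$, and the $P.A$-property of $\mathcal{A}$ produces an element of $I^e$ outside $\bigcup_{P\in\mathcal{A}}P$, which lies in a finite subsum and yields the finite subcover. Your first, topological variant (continuity of the spectral map $f^*$ plus compactness of $\mathcal{A}$ from Theorem \ref{p3}) is a genuine alternative and in fact proves slightly more: it only needs $\mathcal{A}$ compact, not the full $P.A$-property, so it isolates what the first assertion really depends on. For the second assertion both you and the paper contract to $Im(f)$, apply Krull's theorem there, and climb back via lying-over; your treatment is marginally more careful at two points the paper passes over quickly, namely the explicit verification that $f(I)$ (equivalently $f(I)\,Im(f)$) is a proper ideal of $Im(f)$ because $1\in f(I)$ would contradict $I\subseteq\bigcup_{M\in Max(T)}f^{-1}(M)$, and the lift back to $T$: the paper asserts a \emph{maximal} ideal of $T$ lying over $N$, which requires the extra remark that a maximal ideal containing the lying-over prime still contracts to $N$, whereas you sidestep this by taking any prime $Q$ over $N$ and then any maximal ideal of $T$ containing the proper ideal $f(I)T$. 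Your cautionary aside that one cannot argue ``$f(I)$ consists of nonunits of $T$, hence $f(I)T$ is proper'' correctly identifies where the lying-over hypothesis is genuinely needed.
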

\begin{proof}
Let $\{V_{\mathcal{A}'}(I_{\alpha})^c\}_{\alpha\in\Gamma}$ be an open cover for $\mathcal{A}'$, where each $I_{\alpha}$ is an ideal of $R$. Hence we infer that $V_{\mathcal{A}'}(I)=\emptyset$, where $I=\sum_{\alpha\in\Gamma}I_{\alpha}$. Thus for each $P\in\mathcal{A}$ we have $I\nsubseteq f^{-1}(P)$. Therefore for each $P\in\mathcal{A}$ we conclude that $I^e\nsubseteq P$. Now since $\mathcal{A}$ has $P.A$-property we infer that $I^e\nsubseteq \bigcup_{P\in\mathcal{A}}P$. Hence we deduce that there exist $\alpha_1,\ldots,\alpha_n$ in $\Gamma$ such that $(I_{\alpha_1}+\cdots+I_{\alpha_n})^e\nsubseteq \bigcup_{P\in\mathcal{A}}P$. Thus for each $P\in\mathcal{A}$ we have $(I_{\alpha_1}+\cdots+I_{\alpha_n})^e\nsubseteq P$ and therefore $I_{\alpha_1}+\cdots+I_{\alpha_n}\nsubseteq f^{-1}(P)$ for each $P\in\mathcal{A}$. This shows that $\mathcal{A}'=V_{\mathcal{A}'}(I_{\alpha_1})^c\cup\cdots\cup V_{\mathcal{A}'}(I_{\alpha_n})^c$, i.e., $\mathcal{A}'$ is compact.\\
Now suppose that $Im(f)\leq T$ has lying-over and $I$ be an ideal of $R$ which is contained in union of $\mathcal{M}$. Thus $f(I)\subseteq \bigcup_{N\in Max(Im(f))} N$. Therefore by Krull Maximal Ideal Theorem, $f(I)$ is contained in a maximal ideal $N$ of $Im(f)$. Since lying-over holds, we conclude that there exists a maximal ideal $M$ of $T$ over $N$ and therefore contains $f(I)$. Thus $I$ is contained in $f^{-1}(M)$ and we are done.
\end{proof}

We remind that if $M$ is a finitely generated $R$-module, then $supp(M)=V(ann(M))$.

\begin{prop}
Let $R$ be a ring and $\mathcal{A}=\{P_{\alpha}\}_{\alpha\in\Gamma}$ be a compact set of prime ideals of $R$. If $I$ is an ideal of $R$ which is contained in $\bigcup_{\alpha\in \Gamma}P_{\alpha}$, then either $\mathcal{A}\subseteq supp(I)$ or $I\subseteq P_{\alpha}$ for some $\alpha\in\Gamma$. In particular, if $I$ is finitely generate and for each $\alpha\in\Gamma$, $I\nsubseteq P_{\alpha}$, then $\mathcal{A}\subsetneq supp(I)$.
\end{prop}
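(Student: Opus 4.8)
The plan is to dispose of the trivial alternative and then reduce to a finitely generated subideal. If $I\subseteq P_\alpha$ for some $\alpha\in\Gamma$ we are in the first case and there is nothing to prove, so assume $I\nsubseteq P_\alpha$ for every $\alpha\in\Gamma$; we must show $\mathcal{A}\subseteq supp(I)$, i.e.\ $I_P\neq 0$ for every $P\in\mathcal{A}$.

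For each $\alpha\in\Gamma$ choose $x_\alpha\in I\setminus P_\alpha$. Then $P_\alpha\in V_{\mathcal{A}}(x_\alpha)^c$, so $\{V_{\mathcal{A}}(x_\alpha)^c\}_{\alpha\in\Gamma}$ is an open cover of $\mathcal{A}$; since $\mathcal{A}$ is compact there are $\alpha_1,\ldots,\alpha_n$ with $\mathcal{A}=\bigcup_{i=1}^{n}V_{\mathcal{A}}(x_{\alpha_i})^c$. Equivalently, the finitely generated ideal $J:=(x_{\alpha_1},\ldots,x_{\alpha_n})\subseteq I$ satisfies $V_{\mathcal{A}}(J)=\emptyset$, that is, $J\nsubseteq P$ for every $P\in\mathcal{A}$. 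Now fix $P\in\mathcal{A}$ and pick a generator $x_{\alpha_i}\notin P$; then $x_{\alpha_i}/1$ is a unit of $R_P$ lying in $J_P\subseteq I_P$, so the ideal $I_P$ of $R_P$ contains a unit, whence $I_P=R_P\neq 0$ and $P\in supp(I)$. Thus $\mathcal{A}\subseteq supp(I)$. (Here compactness is a convenience rather than a necessity: one could instead take, for each $P\in\mathcal{A}$, an element $x\in I\setminus P$ and observe directly that $x/1\neq 0$ in $I_P$ --- but passing to the finitely generated $J$ is what connects the statement to the earlier results.)

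For the final assertion, assume moreover that $I$ is finitely generated and $I\nsubseteq P_\alpha$ for all $\alpha\in\Gamma$. Then $supp(I)=V(Ann(I))$, and Proposition \ref{np1}(3), applied to the containment $I\subseteq\bigcup_{\alpha\in\Gamma}P_\alpha$ with the alternative $I\subseteq P_\alpha$ ruled out, gives $\mathcal{A}\subsetneq V(Ann(I))=supp(I)$, a strict inclusion. I do not expect any genuine obstacle here: the one substantive move is the compactness reduction to a finitely generated $J$ with $V_{\mathcal{A}}(J)=\emptyset$, after which $J\nsubseteq P$ forces $J_P=R_P$ and hence $I_P\neq 0$ at once.
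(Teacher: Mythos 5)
Your proof is correct and follows essentially the same route as the paper: the compactness reduction to a finitely generated $J\subseteq I$ with $V_{\mathcal{A}}(J)=\emptyset$ (as in Theorem \ref{p3}), and the final strict inclusion via Proposition \ref{np1} together with $supp(I)=V(Ann(I))$ for finitely generated $I$. The only cosmetic difference is that you get $P\in supp(I)$ by noting a generator outside $P$ becomes a unit in $R_P$, whereas the paper deduces $Ann(J)\subseteq P_\alpha$ and uses $supp(J)=V(Ann(J))\subseteq supp(I)$; your side remark that compactness is not strictly needed for this alternative is also accurate.
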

\begin{proof}
Similar to the proof of Theorem \ref{p3}, if for each $\alpha$, $I$ is not contained in $P_{\alpha}$, then there exists a finitely generated ideal $J$ of $R$ which is contained in $I$, but $J$ is not contained in each $P_{\alpha}$. Hence we infer that for each $\alpha\in \Gamma$, $Ann(J)\subseteq P_{\alpha}$. Thus by the above comments for each $\alpha\in\Gamma$, $P_{\alpha}\in supp(J)\subseteq supp(I)$. The final part is evident by Proposition \ref{np1}.
\end{proof}

Let us remind the reader some needed facts from the liturature for the next results. In \cite{mcadm}, McAdam proved that if $R$ is a ring such that for each maximal ideal $M$ of $R$, the residue ring $R/M$ is infinite, then each finite set of ideals of $R$, has $A$-property, i.e., if $I, J_1,\ldots,J_n$ are ideals of $R$ and $I\subseteq \bigcup_{k=1}^n J_k$, then $I\subseteq J_k$ for some $k$. In \cite{qb}, Quartararo and Butts, called an ideal $I$ with the latter property a $u$-ideal. They proved that each invertible ideal of a ring $R$ is a $u$-ideal (see \cite[Theorem 1.5]{qb}) and characterized rings for which each ideal of $R$ is a $u$-ideal, and called them $u$-rings. In fact they proved that a ring $R$ is a $u$-ring if and only if for each maximal ideal $M$ of $R$ either $R/M$ is infinite or $R_M$ is a Bezout ring (see \cite[Theorem 2.6]{qb}). Finally, In \cite[Corollary 2.6]{shvm}, Sharpe and Vamos proved that if $(R,M)$ is a local noetherian ring with uncountable residue field and $I,J_1,J_2,\ldots$ are ideals of $R$ such that $I\subseteq \bigcup_{k=1}^\infty J_k$, then $I\subseteq J_k$ for some $k$.

\begin{cor}
Let $(R,M)$ be a local ring which is either an uncountable artinian or a noetherian with $|R|>2^{\aleph_0}$. If $I,J_1,J_2,\ldots$ be ideals of $R$ such that $I\subseteq \bigcup_{n=1}^\infty J_n$, then $I\subseteq I_n$ for some $n\geq 1$.
\end{cor}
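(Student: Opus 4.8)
The plan is to reduce both hypotheses to the Sharpe--Vamos theorem \cite[Corollary 2.6]{shvm}, which treats a local \emph{noetherian} ring with \emph{uncountable} residue field: if such a ring satisfies $I \subseteq \bigcup_{n=1}^{\infty} J_n$ then $I \subseteq J_n$ for some $n$. Thus the entire task is to check that in each of the two cases $R$ is a noetherian local ring and $R/M$ is uncountable; once this is done, \cite[Corollary 2.6]{shvm} applies verbatim and gives the conclusion.

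First I would dispose of the artinian case. A commutative artinian ring is noetherian, so $(R,M)$ is in particular a noetherian local ring; moreover $M$ is nilpotent, say $M^{k+1} = 0$. Then $R$ carries the finite filtration $R \supseteq M \supseteq M^2 \supseteq \cdots \supseteq M^{k+1} = 0$ whose successive quotients $M^i/M^{i+1}$ are finite-dimensional vector spaces over the field $R/M$, and since $|B| = |A|\,|C|$ for every short exact sequence $0 \to A \to B \to C \to 0$, we get $|R| = |R/M|^{\ell}$, where $\ell = \ell(R)$ is the (finite) length of $R$. If $R/M$ were countable this would force $R$ to be countable, contradicting the hypothesis; hence $R/M$ is uncountable.

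Next I would treat the noetherian case with $|R| > 2^{\aleph_0}$, arguing by contradiction: suppose $R/M$ is countable. By Krull's intersection theorem $\bigcap_{n \geq 1} M^n = 0$ in the noetherian local ring $R$, so the canonical map $R \hookrightarrow \prod_{n \geq 1} R/M^n$ is injective. For each $n$, the ring $R/M^n$ admits the finite filtration with quotients $M^i/M^{i+1}$ ($0 \leq i < n$), each a finitely generated $R/M$-module and hence countable; therefore $R/M^n$ is countable. Consequently $|R| \leq \prod_{n \geq 1} |R/M^n| \leq \aleph_0^{\aleph_0} = 2^{\aleph_0}$, contradicting $|R| > 2^{\aleph_0}$. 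So $R/M$ is uncountable, and \cite[Corollary 2.6]{shvm} again finishes the argument.

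The only mildly delicate ingredients are the two cardinality estimates --- that a module of finite length (respectively, finitely generated) over a countable field is countable, and the bound $\bigl|\prod_{n} R/M^n\bigr| \leq 2^{\aleph_0}$ obtained from Krull's intersection theorem. Both are routine once set up, so I do not anticipate a genuine obstacle: the statement is essentially a cardinality bookkeeping layer placed on top of \cite{shvm}, the real content being the verification that ``uncountable artinian'' and ``noetherian with $|R| > 2^{\aleph_0}$'' both entail an uncountable residue field.
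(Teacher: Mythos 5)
Your proposal is correct and follows essentially the same route as the paper: in both cases one verifies that the residue field $R/M$ is uncountable and then invokes \cite[Corollary 2.6]{shvm}. The only difference is that you prove the cardinality facts directly (finite length gives $|R|=|R/M|^{\ell}$ in the artinian case, and Krull's intersection theorem with the embedding $R\hookrightarrow\prod_n R/M^n$ in the noetherian case), whereas the paper cites the proofs of \cite[Proposition 1.4]{azkart} and \cite[Corollary 2.6]{azkcm} for the same conclusions.
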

\begin{proof}
If $R$ is an uncountable artinian ring, then by the proof of \cite[Proposition 1.4]{azkart}, $R/M$ is uncountable and therefore we are done by \cite[Corollary 2.6]{shvm}. If $R$ is a noetherian with $|R|>2^{\aleph_0}$, then by the proof of \cite[Corollary 2.6]{azkcm}, there exist a natural number $n$ such that $R/M^n$ is uncountable and similar to the proof of the first part $R/M$ is uncountable and hence we are done.
\end{proof}

\begin{cor}
Let $R$ be a noetherian integral domain with $|R|>2^{\aleph_0}$, then $R$ is a $u$-ring.
\end{cor}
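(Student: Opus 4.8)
The plan is to combine two ingredients already recalled above: McAdam's theorem, to the effect that a ring all of whose residue fields are infinite is a $u$-ring (i.e.\ every finite family of ideals has $A$-property), and the cardinality computation used in the proof of the preceding corollary. Thus it suffices to show that $R/M$ is infinite for every maximal ideal $M$ of $R$.

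First I would fix a maximal ideal $M$ and pass to the localization $R_M$. The natural map $R\to R_M$ is injective, since $R$ is a domain, so $|R_M|\ge |R|$; on the other hand $R_M$ is a quotient of $R\times(R\setminus M)$, so $|R_M|\le |R|$, whence $|R_M|=|R|>2^{\aleph_0}$. Now $R_M$ is a noetherian local domain with residue field $R_M/MR_M\cong R/M$, so the problem is reduced to showing that a noetherian local ring of cardinality greater than $2^{\aleph_0}$ has infinite residue field. This is exactly the computation carried out in the proof of the previous corollary: by (the proof of) \cite[Corollary 2.6]{azkcm} there is a natural number $n\ge 1$ with $R_M/M^nR_M$ uncountable, and the finite filtration $R_M\supseteq MR_M\supseteq\cdots\supseteq M^nR_M$ has successive quotients that are finitely generated modules over $R_M/MR_M\cong R/M$; so if $R/M$ were countable then $R_M/M^nR_M$ would be countable as well, a contradiction. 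Hence $R/M$ is uncountable, in particular infinite.

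Since $M$ was an arbitrary maximal ideal, every residue field of $R$ is infinite, and McAdam's theorem \cite{mcadm} then yields that every finite set of ideals of $R$ has $A$-property; that is, $R$ is a $u$-ring. There is no real obstacle here, as the substantive work lives in the cited results; the only points requiring care are the cardinality bookkeeping $|R_M|=|R|$ under localization and the transfer of uncountability from $R_M/M^nR_M$ down to the residue field, both of which are routine.
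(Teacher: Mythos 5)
Your proof is correct and follows essentially the same route as the paper: show that $R/M$ is uncountable for every maximal ideal $M$ by using the cardinality results of Azarang--Karamzadeh to get some $R/M^n$ (here $R_M/M^nR_M$) uncountable and then descending to the residue field, and conclude by a recalled finite-union theorem. The only cosmetic differences are that you first localize and check $|R_M|=|R|$ so as to quote the local statement used in the preceding corollary (the paper instead cites the proof of \cite[Corollary 2.7]{azkcm} directly for the domain $R$), and that you finish with McAdam's theorem rather than the Quartararo--Butts criterion \cite[Theorem 2.6]{qb}; both steps are interchangeable with the paper's and your bookkeeping is sound.
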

\begin{proof}
For each maximal ideal $M$ of $R$, by the proof of \cite[Corollary 2.7]{azkcm}, there exists a natural number $n$ such that the ring $R/M^n$ is uncountable. Therefore similar to the proof of the first part of the previous corollary we infer that $R/M$ is uncountable. Thus we are done by \cite[Theorem 2.6]{qb}.
\end{proof}

Let $R$ be a ring. A proper subring $S$ ($1_R\in S$) is called a maximal subring if there exists no other subring of $R$ between $S$ and $R$. We refer the reader to $[4-7]$ for the existence of maximal subrings in commutative rings.

\begin{cor}
Let $R$ be noetherian integral domain with nonzero characteristic which is not equal to its prime subring (i.e., $R\neq \mathbb{Z}_p$, where $p=Char(R)$) then either $R$ has a maximal subring or $R$ is a $u$-ring.
\end{cor}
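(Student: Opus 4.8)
The plan is to distinguish cases according to the residue fields of $R$. Since $\mathrm{Char}(R)=p>0$ and $R$ is a domain, its prime subring is the prime field $\mathbb{F}_p=\mathbb{Z}/p\mathbb{Z}$.

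First I would dispose of the case in which $R/M$ is infinite for every maximal ideal $M$ of $R$: by the theorem of McAdam recalled above, every finite set of ideals of $R$ then has $A$-property, that is, $R$ is a $u$-ring, and there is nothing left to prove. This case already subsumes the hypothesis $|R|>2^{\aleph_0}$ of the previous corollary, since if some $R/M$ were finite then, $R$ being a noetherian domain, $R$ would embed in $R_M$ and $R_M$ in its $M$-adic completion $\varprojlim_n R_M/M^nR_M$, which has cardinality at most $2^{\aleph_0}$ because each $R_M/M^nR_M$ is finite.

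So I may assume that $R$ has a maximal ideal $M$ with $R/M$ finite, and I claim that then $R$ has a maximal subring. If $R$ is finite, then, being a finite integral domain, $R$ is a field $\mathbb{F}_{p^n}$; as $R\neq\mathbb{F}_p$ we have $n\geq 2$, and, taking $q$ to be the least prime divisor of $n$, the subfield $\mathbb{F}_{p^{n/q}}$ is a maximal subring of $R$, because the subrings of $\mathbb{F}_{p^n}$ are exactly the subfields $\mathbb{F}_{p^d}$ with $d\mid n$, and $(n/q)\mid d\mid n$ with $q$ prime forces $d\in\{n/q,\,n\}$. If instead $R$ is infinite, then $R$ is not algebraic over $\mathbb{F}_p$: a domain algebraic over a field is a field, and then $M=(0)$, so $R=R/M$ would be finite, contrary to assumption. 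Hence $R$ contains an element transcendental over $\mathbb{F}_p$, so $R$ is not integral over its prime subring, and by the results on the existence of maximal subrings quoted in $[4-7]$ the ring $R$ has a maximal subring. This exhausts the cases.

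The step I expect to be the crux is this last appeal to $[4-7]$: that a commutative ring which is not integral over its prime subring---here a noetherian domain of characteristic $p$ containing an element transcendental over $\mathbb{F}_p$---must possess a maximal subring. The remaining ingredients are elementary, the only explicit construction being the maximal subfield $\mathbb{F}_{p^{n/q}}$ of $\mathbb{F}_{p^n}$; note also that the hypothesis $R\neq\mathbb{F}_p$ is used precisely to rule out the unique noetherian domain of characteristic $p$ with a finite residue field that has no proper subring at all.
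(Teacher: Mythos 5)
Your proof is correct in substance, but it follows a genuinely different route from the paper. You split according to residue fields: if every $R/M$ is infinite you invoke McAdam's theorem (equivalently \cite[Theorem 2.6]{qb}) to get that $R$ is a $u$-ring, while if some $R/M$ is finite you produce a maximal subring, handling finite $R$ by the explicit maximal subfield $\mathbb{F}_{p^{n/q}}$ of $\mathbb{F}_{p^n}$ and infinite $R$ by noting it contains an element transcendental over $\mathbb{F}_p$ and then appealing to the existence theorem ``a ring not integral over its prime subring has a maximal subring.'' The paper argues in the opposite direction: assuming $R$ has no maximal subring, it deduces that $R$ is countable from \cite[Corollary 2.4]{aznot}, then that every proper residue ring $R/I$ is infinite from \cite[Proposition 3.14]{azkmc}, and concludes by \cite[Theorem 2.6]{qb}; so the paper never needs the non-integrality criterion, only cardinality-type results for noetherian rings. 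Your crux step is a real theorem of Azarang, but it comes from his earlier paper \emph{On maximal subrings} rather than being stated verbatim in the references $[4$--$7]$ you point to, so the citation as given is loose. If you want to make that case self-contained, note that it can be done without any transcendence argument: if $R$ is infinite and $R/M$ is finite, then $M\neq M^2$ (a finitely generated idempotent maximal ideal in a domain would be generated by an idempotent, forcing $M=0$ and $R$ finite), so $R/M^2$ is a finite ring of characteristic $p$ with $|R/M^2|\geq p^2$, hence it properly contains its prime subring and, being finite, has a maximal subring; the preimage in $R$ of that maximal subring (which contains $M^2$) is then a maximal subring of $R$. With either justification of the crux, your case analysis is complete and proves the stated dichotomy.
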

\begin{proof}
Assume that $R$ has no maximal subring, then clearly $R$ is infinite and by \cite[Corollary 2.4]{aznot}, we infer that $R$ is countable. Now by \cite[Proposition 3.14]{azkmc}, we conclude that for each proper (maximal) ideal $I$ of $R$, the residue ring $R/I$ is infinite. Thus we are done by \cite[Theorem 2.6]{qb}.
\end{proof}

\begin{cor}
Each infinite artinian local ring is a $u$-ring.
\end{cor}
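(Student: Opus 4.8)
The plan is to reduce the statement to McAdam's criterion and then check its hypothesis by a short cardinality count. Since $(R,M)$ is local, $M$ is its unique maximal ideal, so by McAdam's theorem \cite{mcadm} (equivalently, by \cite[Theorem 2.6]{qb}, whose second alternative ``$R_M$ is Bezout'' we will not even need) it suffices to prove that the residue field $R/M$ is infinite.

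First I would invoke that an Artinian ring has finite length as a module over itself; in particular $R$ is Noetherian, there is an integer $n$ with $M^n=0$, and $R$ carries the finite filtration $R=M^0\supseteq M\supseteq M^2\supseteq\cdots\supseteq M^n=0$ whose successive quotients $M^i/M^{i+1}$ are finitely generated $R/M$-modules, hence finite-dimensional $R/M$-vector spaces, say $d_i=\dim_{R/M}M^i/M^{i+1}<\infty$. Counting along the filtration gives
\[
\abs{R}=\prod_{i=0}^{n-1}\abs{M^i/M^{i+1}}=\prod_{i=0}^{n-1}\abs{R/M}^{\,d_i}.
\]
Now suppose toward a contradiction that $R/M$ is finite. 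Then each factor $\abs{R/M}^{d_i}$ is finite, so $\abs{R}$ is finite, contradicting the hypothesis that $R$ is infinite. Hence $R/M$ is infinite, and since $M$ is the only maximal ideal of $R$, the residue ring modulo every maximal ideal of $R$ is infinite; by \cite[Theorem 2.6]{qb} (or \cite{mcadm}) $R$ is a $u$-ring.

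I expect no real obstacle here: the only observation that has to be made is the standard fact that for an Artinian local ring finiteness of the residue field forces finiteness of the whole ring, after which the result is immediate from the cited characterization of $u$-rings.
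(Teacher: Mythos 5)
Your proof is correct and follows essentially the same route as the paper: reduce the statement to showing that the residue field $R/M$ is infinite and then apply \cite[Theorem 2.6]{qb}. The only difference is that you verify the infiniteness of $R/M$ directly through the finite filtration $R\supseteq M\supseteq\cdots\supseteq M^n=0$ and a cardinality count, whereas the paper cites \cite[Corollary 1.5]{aznot}, which yields the stronger fact $|R/M|=|R|$; your self-contained argument is valid and perfectly adequate for the statement.
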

\begin{proof}
Let $(R,M)$ be an infinite artinian local ring, then by \cite[Corollary 1.5]{aznot}, we deduce that $|R/M|=|R|$. Hence we are done by \cite[Theorem 2.6]{qb}.
\end{proof}

One of the application of prime avoidance lemma is a theorem which referred in \cite[Theorem 124]{kap} to E. Davis: Let $R$ be a commutative ring, $I$  an ideal of $R$, $a\in R$ and $P_1,\ldots, P_n$ prime ideals of $R$; if $Ra+I\nsubseteq \bigcup_{i=1}^n P_i$, then $a+c\notin \bigcup_{i=1}^n P_i$, for some $c\in I$. In particular, if $R$ is a semilocal ring and $Ra+I=R$, then there exists $c\in I$ such that $a+c$ is a unit of $R$. The latter result is true for non-commutative semilocal rings as Bass' Stable Range Theorem. Now the following is in order.

\begin{thm}\label{dvthm}
Let $R$ be a ring, $\mathcal{A}\subseteq Spec(R)$, $I$ an ideal of $R$, $a\in R$ and $Q_1,\ldots Q_n$ be prime ideals of $R$. Assume that $Ra+I \nsubseteq (\bigcup_{P\in V_{\mathcal{A}}(a)}P)\cup Q_1\cup\cdots\cup Q_n$. If $V_{\mathcal{A}}(a)$ has $P.A$-property for $R$ (in particular, if $V(a)\subseteq \mathcal{A}$ or $\mathcal{A}\cap Spec(R)=V_M(a)$), then there exists $c\in I$ such that $a+c\notin (\bigcup_{P\in V_{\mathcal{A}}(a)}P)\cup Q_1\cup\cdots\cup Q_n$.
\end{thm}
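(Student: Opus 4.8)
The plan is to peel the statement down to E. Davis' theorem \cite[Theorem 124]{kap} by using the $P.A$-property of $V_{\mathcal A}(a)$ to ``compress'' the infinite part of the covering union, and then to run the classical Davis induction on the finitely many $Q_i$. If $V_{\mathcal A}(a)=\emptyset$ the assertion is precisely Davis' theorem applied to $I,a,Q_1,\dots,Q_n$, so I would assume $V_{\mathcal A}(a)\neq\emptyset$ and set $W=\bigcup_{P\in V_{\mathcal A}(a)}P$. The elementary but essential remark is that $a\in P$ for every $P\in V_{\mathcal A}(a)$, hence $a\in W$ and, for every $z\in R$, $z+a\in W$ if and only if $z\in W$; the same holds for the union of those $Q_i$ that contain $a$. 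Then I would make two harmless reductions: delete any $Q_i$ with $Q_i\subseteq Q_j$ for $i\neq j$, and delete any $Q_i$ with $Q_i\subseteq W$ --- for the latter note that, since $V_{\mathcal A}(a)$ has $P.A$-property and $Q_i$ is an ideal, $Q_i\subseteq W$ is equivalent to $Q_i\subseteq P$ for some $P\in V_{\mathcal A}(a)$, and in either case $W\cup Q_1\cup\dots\cup Q_n$ is unchanged. After this the $Q_i$ are pairwise incomparable and none of them is contained in any member of $V_{\mathcal A}(a)$.

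The main step is to absorb the primes $Q_i$ that contain $a$ into the family: I claim that $\mathcal B:=V_{\mathcal A}(a)\cup\{Q_i : a\in Q_i\}$ still has $P.A$-property. Granting this, because $\bigcup_{\mathfrak p\in\mathcal B}\mathfrak p\cup\bigcup_{a\notin Q_i}Q_i=W\cup Q_1\cup\dots\cup Q_n$, we have replaced $V_{\mathcal A}(a)$ by the $P.A$-family $\mathcal B$ (still consisting of primes containing $a$) and reduced to the case where no remaining $Q_i$ contains $a$. To prove the claim I would use Theorem~\ref{p3}: $\mathcal B$ is a finite union of compact subspaces of $Spec(R)$, hence compact, so it suffices to show $\mathcal B$ has $P.A$-property for finitely generated ideals; by induction on the number of adjoined primes this comes down to adjoining a single prime $Q$ with $a\in Q$, and since every prime in sight then contains $a$, passing to $R/(a)$ reduces the statement to the bare assertion that adjoining one prime to a $P.A$-family preserves the $P.A$-property for finitely generated ideals.

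This last assertion is where the real work lies and is the obstacle I most expect: given a finitely generated ideal $J\subseteq(\bigcup_{\mathfrak p\in\mathcal B_0}\mathfrak p)\cup Q$ one must show $J\subseteq\bigcup_{\mathfrak p\in\mathcal B_0}\mathfrak p$ or $J\subseteq Q$, and because $\bigcup_{\mathfrak p\in\mathcal B_0}\mathfrak p$ is an infinite union one cannot simply invoke the two‑prime avoidance lemma. I would attack it by first enlarging the generating set of $J$ so that every generator lies outside $Q$ (replace each generator in $Q$ by its sum with a fixed element of $J\setminus Q$ and throw in that element), so that all generators lie in $\bigcup_{\mathfrak p\in\mathcal B_0}\mathfrak p$, and then exploit the compactness of $\mathcal B_0$ (via Theorem~\ref{p3}) to cut the cover of $J$ down to finitely many members of $\mathcal B_0$ and finish with ordinary prime avoidance. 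Equivalently, this is exactly the case $a\in Q_n$ of the induction below, which otherwise obstructs the naive perturbation argument.

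With no $Q_i$ containing $a$, I would conclude by induction on $n$. For $n=0$: if $I\subseteq\bigcup_{\mathfrak p\in\mathcal B}\mathfrak p$ then the $P.A$-property of $\mathcal B$ gives $I\subseteq\mathfrak p_0$ for some $\mathfrak p_0\in\mathcal B$, whence $Ra+I\subseteq\mathfrak p_0$ (as $a\in\mathfrak p_0$), contradicting the hypothesis; so pick $c\in I\setminus\bigcup_{\mathfrak p\in\mathcal B}\mathfrak p$, and then $a+c\notin\bigcup_{\mathfrak p\in\mathcal B}\mathfrak p$. For the step, apply the case of $n-1$ primes to get $c_0\in I$ with $a+c_0\notin(\bigcup_{\mathfrak p\in\mathcal B}\mathfrak p)\cup Q_1\cup\dots\cup Q_{n-1}$; if $a+c_0\notin Q_n$ we are done, so assume $a+c_0\in Q_n$. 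Put $N=\bigcap_{\mathfrak p\in\mathcal B}\mathfrak p$; then $a\in N$ and, crucially, $N\not\subseteq Q_n$ since $a\notin Q_n$. Now $K:=I\cap N\cap Q_1\cap\dots\cap Q_{n-1}\not\subseteq Q_n$, for otherwise $I\cdot N\cdot Q_1\cdots Q_{n-1}\subseteq Q_n$ forces $I\subseteq Q_n$ (then $c_0\in Q_n$ and $a+c_0\in Q_n$ give $a\in Q_n$, impossible), or $N\subseteq Q_n$ (impossible), or $Q_j\subseteq Q_n$ for some $j<n$ (impossible by incomparability). Choosing $t\in K\setminus Q_n$ and $c=c_0+t\in I$, one checks that $c\notin\bigcup_{\mathfrak p\in\mathcal B}\mathfrak p$ (for each $\mathfrak p\in\mathcal B$, $t\in N\subseteq\mathfrak p$ while $c_0\notin\mathfrak p$), hence $a+c\notin\bigcup_{\mathfrak p\in\mathcal B}\mathfrak p$; that $a+c\notin Q_j$ for $j<n$ (since $a+c_0\notin Q_j$ and $t\in Q_j$); and that $a+c\notin Q_n$ (since $a+c_0\in Q_n$ and $t\notin Q_n$). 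Therefore $a+c\notin W\cup Q_1\cup\dots\cup Q_n$, which is the desired conclusion.
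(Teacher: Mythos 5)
Your handling of the case in which no $Q_i$ contains $a$ is correct and complete: after the harmless incomparability reductions, the induction on $n$ with the correcting element $t\in I\cap N\cap Q_1\cap\cdots\cap Q_{n-1}$, $t\notin Q_n$, is a sound Davis-style argument. It is a mild variant of the paper's proof, which treats all $n$ at once: there one picks $d\in(Q_1\cap\cdots\cap Q_n)\setminus\bigcup_{P\in V_{\mathcal{A}}(a)}P$ (using the $P.A$-property) and a witness $ra+b$ of the hypothesis, and sets $c=db$. Note that this one-shot construction, like your induction, really only works when $a\notin Q_i$ for every retained $Q_i$: if $a\in Q_i$ then $c=db\in Q_i$ forces $a+c\in Q_i$ (already in $R=\mathbb{Z}$ with $\mathcal{A}=\{(2)\}$, $a=6$, $Q_1=(3)$, $I=(5)$, $d=3$, $b=5$ the constructed element $a+c=21$ fails, although $c=5$ works). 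So the case you isolate is a genuine one, and it is exactly the case the paper's argument silently assumes away (it is vacuous when $V(a)\subseteq\mathcal{A}$, since then any $Q_i\ni a$ lies in $V_{\mathcal{A}}(a)$ and is discarded by the first reduction).

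The gap is your absorption step. The claim that adjoining a prime to a family with $P.A$-property again yields a family with $P.A$-property is not established by your sketch, and it is in fact false. The sketch fails because compactness of $\mathcal{B}_0$ only lets you replace $J$ by a finitely generated subideal not contained in $Q$ nor in any member of $\mathcal{B}_0$; the covering $J\subseteq(\bigcup_{\mathfrak{p}\in\mathcal{B}_0}\mathfrak{p})\cup Q$ still involves infinitely many primes, and knowing that each generator lies in $\bigcup_{\mathfrak{p}\in\mathcal{B}_0}\mathfrak{p}$ gives no handle for ordinary (finite) prime avoidance; the compactness argument of Theorem \ref{p3} needs $J\subseteq\bigcup_{\mathfrak{p}\in\mathcal{B}_0}\mathfrak{p}$, which is precisely what is in doubt. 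For a counterexample to the claim itself, take a Dedekind domain $D$ having maximal ideals $\mathfrak{p}_1$, $\mathfrak{q}_0$ whose classes are $g$ and $-g$ with $g$ of infinite order in the class group (e.g. the coordinate ring of an affine elliptic curve over $\mathbb{C}$ and a non-torsion point; compare Corollary \ref{nqrd}). Put $T=\{\text{maximal ideals of class } g\}\cup\{\mathfrak{q}_0\}$ and $\mathcal{B}_0=Max(D)\setminus T$. If a nonzero ideal $J$ has all its prime factors in $T$, then $[J]\in\mathbb{Z}g$, and since products of primes from $T$ realize every class in $\mathbb{Z}g$, there is an integral ideal $C$ with factors in $T$ and $JC$ principal; its generator lies in $J$ and outside $\bigcup\mathcal{B}_0$, so $\mathcal{B}_0$ has $P.A$-property. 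But $\mathcal{B}_0\cup\{\mathfrak{q}_0\}$ does not: the finitely generated ideal $\mathfrak{p}_1$ is contained in no member, yet every $0\neq x\in\mathfrak{p}_1$ lies in some member, for otherwise all prime factors of $(x)$ would have class $g$ and $0=[(x)]=mg$ with $m\geq 1$, impossible. Taking $a=0$ (every prime contains $0$) shows the claim fails even in your restricted form where all primes involved contain $a$. This does not by itself refute the theorem in the mixed case, but it does mean your route cannot be completed as proposed: as it stands, your proof (like the paper's) establishes the statement only when no $Q_i$ containing $a$ survives the reductions, e.g. under the hypothesis $V(a)\subseteq\mathcal{A}$.
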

\begin{proof}
We may assume that $Q_i\nsubseteq P$, for each $i$ and $P\in V_{\mathcal{A}}(a)$. If $n=0$, then by assumption there exist $c\in I$ and $r\in R$ such that $ra+c\notin\bigcup_{P\in V_{\mathcal{A}}(a)} P$. Since each $P\in V_{\mathcal{A}}(a)$ contains $a$, we infer that $c\notin \bigcup_{P\in V_{\mathcal{A}}(a)} P$ and hence $a+c\notin \bigcup_{P\in V_{\mathcal{A}}(a)} P$. Thus theorem holds for $n=0$. Hence assume that $n\geq 1$. Since $V_{\mathcal{A}}(a)$ has $P.A$-property, we conclude that $Q_1\cap\cdots\cap Q_n\nsubseteq \bigcup_{P\in V_{\mathcal{A}}(a)}P$. Therefore there exists $d\in (Q_1\cap\cdots\cap Q_n)\setminus (\bigcup_{P\in V_{\mathcal{A}}(a)}P)$. Now by assumption there exists $r\in R$ and $b\in I$ such that $ra+b\notin (\bigcup_{P\in V_{\mathcal{A}}(a)}P)\cup Q_1\cup\cdots\cup Q_n$. Now one can easily see that $a+c\not \in (\bigcup_{P\in v(a)}P)\cup Q_1\cup\cdots\cup Q_n$, where $c:=db\in I$ and hence we are done.
\end{proof}

\begin{cor}
Let $R$ be a noetherian $QR$-domain. Assume that $\mathcal{A}\subseteq Spec(R)$, $I$ an ideal of $R$ and $a\in R$ such that $Ra+I\nsubseteq \bigcup_{P\in\mathcal{A}} P$. If $\mathcal{A}\setminus V_{\mathcal{A}}(a)$ is finite, then there exists $c\in I$ such that $a+c\notin \bigcup_{P\in\mathcal{A}} P$.
\end{cor}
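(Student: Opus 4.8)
The plan is to reduce the statement to a single application of Theorem \ref{dvthm}, with the $P.A$-property hypothesis supplied automatically by the hypothesis on $R$. Since $R$ is a noetherian $QR$-domain, Corollary \ref{nqrd}(1) tells us that \emph{every} subset of $Spec(R)$ has $P.A$-property for $R$; in particular the set $V_{\mathcal{A}}(a)$ of primes of $\mathcal{A}$ containing $a$ has $P.A$-property for $R$. This gives, for free, the structural hypothesis that Theorem \ref{dvthm} requires.

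Next I would split $\mathcal{A}$ as the disjoint union $V_{\mathcal{A}}(a)\sqcup\bigl(\mathcal{A}\setminus V_{\mathcal{A}}(a)\bigr)$. By hypothesis the second piece is finite, say $\mathcal{A}\setminus V_{\mathcal{A}}(a)=\{Q_1,\ldots,Q_n\}$ (allowing $n=0$); since $\mathcal{A}\subseteq Spec(R)$, each $Q_i$ is genuinely a prime ideal of $R$, so they are admissible as the finitely many extra primes appearing in Theorem \ref{dvthm}. With this notation one has the set equality
$$\bigcup_{P\in\mathcal{A}}P=\Bigl(\bigcup_{P\in V_{\mathcal{A}}(a)}P\Bigr)\cup Q_1\cup\cdots\cup Q_n,$$
so the hypothesis $Ra+I\nsubseteq\bigcup_{P\in\mathcal{A}}P$ is precisely the covering hypothesis needed to invoke Theorem \ref{dvthm}.

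Finally, applying Theorem \ref{dvthm} (with the same $I$ and $a$, the set $\mathcal{A}$ as given, and $Q_1,\ldots,Q_n$ as above) produces an element $c\in I$ with $a+c\notin\bigl(\bigcup_{P\in V_{\mathcal{A}}(a)}P\bigr)\cup Q_1\cup\cdots\cup Q_n=\bigcup_{P\in\mathcal{A}}P$, which is exactly the desired conclusion. I do not expect a genuine obstacle here beyond checking that the hypotheses of Theorem \ref{dvthm} line up; the only point worth a moment's care is the degenerate case $\mathcal{A}=V_{\mathcal{A}}(a)$, i.e.\ $n=0$, but that is explicitly handled by the $n=0$ branch in the proof of Theorem \ref{dvthm}.
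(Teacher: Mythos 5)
Your proposal is correct and follows essentially the same route as the paper: invoke Corollary \ref{nqrd}(1) to see that $V_{\mathcal{A}}(a)$ has $P.A$-property, take $Q_1,\ldots,Q_n$ to be the finitely many primes in $\mathcal{A}\setminus V_{\mathcal{A}}(a)$, and apply Theorem \ref{dvthm}. Your write-up merely makes explicit the decomposition of $\bigcup_{P\in\mathcal{A}}P$ that the paper leaves implicit.
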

\begin{proof}
By Corollary \ref{nqrd}, $V_{\mathcal{A}}(a)$ has $P.A$-property and hence we are done by the previous theorem.
\end{proof}

Theorem \ref{dvthm}, might mislead us to generalize the Bass' Stable Range Theorem for semilocal commutaive rings, to comutative rings in which every non-unit element is not contained in only finitely many maximal ideals, but in fact the latter result is the same result, since one can easily see that (by Zariski topology) a ring $R$ is semilocal if and only if each non-unit element is not contained in only finitely many maximal ideals.\\

Finally in this paper we want to give a valuation version of Davis Theorem. First we need some observation from \cite{azsp}. Let $V, V_1,\ldots, V_n$ be valuations for a field $K$, if $V\subseteq \bigcup_{i=1}^n V_i$, then $V\subseteq V_i$ for some $i$, see \cite[Corollary 3.10]{azsp} (this fact is called Valuation Avoidance Lemma). More generally, if $W_1,\ldots, W_m$ are also valuation for $K$ and $\bigcap_{i=1}^m W_i\subseteq \bigcup_{i=1}^n V_i$, then there exist $i$ and $j$ such that $W_j\subseteq V_i$, see \cite[Remark 3.11]{azsp}. We refer the reader to \cite[Theorem 6 and Corollary 8]{got2} for generalization of these facts. Now the following is in order.

\begin{thm}
Let $V,V_1,\ldots, V_n$ be valuations for a field $K$ and $x\in K$. If $V[x]\nsubseteq \bigcup_{i=1}^n V_i$, then there exists $v\in V$ such that $v+x\notin \bigcup_{i=1}^n V_i$.
\end{thm}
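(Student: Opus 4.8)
The plan is to reduce the statement to a ``coset'' form of the Valuation Avoidance Lemma. Call the following the Coset Avoidance Lemma (CAL): \emph{if $c\in K$ and $c+V\subseteq\bigcup_{i=1}^n V_i$, then $c+V\subseteq V_i$ for some $i$.} Granting CAL, the theorem is immediate: if no $v\in V$ satisfied $v+x\notin\bigcup_{i=1}^nV_i$, then $v+x\in\bigcup_{i=1}^nV_i$ for every $v\in V$, i.e.\ $x+V\subseteq\bigcup_{i=1}^nV_i$, so CAL gives an index $i$ with $x+V\subseteq V_i$; taking $v=0$ yields $x\in V_i$, and then for every $v\in V$ we get $v=(v+x)-x\in V_i$, hence $V\subseteq V_i$. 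Since $V_i$ is a ring containing $V$ and $x$ it contains $V[x]$, so $V[x]\subseteq V_i\subseteq\bigcup_{i=1}^nV_i$, contradicting the hypothesis. Thus everything comes down to proving CAL.

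For CAL, if $c\in V$ then $c+V=V$ and CAL is exactly the Valuation Avoidance Lemma \cite[Corollary 3.10]{azsp}. So suppose $c\notin V$; write $w,w_1,\ldots,w_n$ for the valuations attached to $V,V_1,\ldots,V_n$. Then $w(c)<0$, and since $w(c+a)=w(c)$ for all $a\in V$ the coset $c+V$ is disjoint from $V$. I would argue by induction on $n$, following the proof of the Valuation Avoidance Lemma. We may discard any $V_i$ with $V_i\subseteq V_j$ for some $j\ne i$, so assume the $V_i$ pairwise incomparable; and if $c+V\subseteq\bigcup_{i\ne j}V_i$ for some $j$ we finish by induction, so we may assume that for each $j$ there is $a_j\in V$ with $b_j:=c+a_j\notin\bigcup_{i\ne j}V_i$, while necessarily $b_j\in V_j$ because $b_j\in c+V\subseteq\bigcup_iV_i$. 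In valuative terms: $w_i(b_j)<0$ for $i\ne j$, $w_j(b_j)\ge0$, and $b_j-b_k\in V$ for all $j,k$.

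What remains is to manufacture from the $b_j$ a single element $b\in c+V$ with $w_i(b)<0$ for every $i$, which contradicts $c+V\subseteq\bigcup_iV_i$ and closes the induction. One looks for $b$ of the shape $b_1-\sum_{j\ge2}s_j(b_1-b_j)$ with $s_j\in V$ (every such $b$ automatically lies in $c+V$) and picks the $s_j\in V$ with $w_i$-values negative enough to force each $w_i(b)<0$, in the spirit of the element-chasing in the classical proofs of Prime Avoidance and of Davis's theorem, pairwise incomparability of $V,V_1,\ldots,V_n$ being used to produce elements of $V$ lying far outside a prescribed $V_i$. The hard part will be precisely this combination step: since the value groups need not be archimedean one cannot simply raise an element to a large power, so the construction of the $s_j$ (perhaps after splitting the indices according to whether $x\in V_i$ and whether $V_i$ is independent of $V$) has to be done with some care. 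As a sanity check, when $V,V_1,\ldots,V_n$ are pairwise independent the approximation theorem for valuations directly supplies $a\in V^\times$ with each $w_i(a)$ as negative as desired, so $w_i(c+a)=w_i(a)<0$ for all $i$ and CAL holds; the real content of CAL lies in the dependent case.
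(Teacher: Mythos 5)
Your reduction of the theorem to the ``Coset Avoidance Lemma'' (CAL) is logically correct, but it does not advance the problem: CAL is in fact equivalent to the theorem (given the Valuation Avoidance Lemma, the theorem yields CAL by an easy induction, since if $V\subseteq V_{i_0}$ and $c\notin V_{i_0}$ then $c+V$ is disjoint from $V_{i_0}$), so all of the content is concentrated in the step you leave open. And that step is genuinely open in your write-up: after setting up the elements $b_j\in c+V$ with $w_j(b_j)\ge 0$ and $w_i(b_j)<0$ for $i\ne j$, you only propose the ansatz $b=b_1-\sum_{j\ge 2}s_j(b_1-b_j)$ and acknowledge that choosing the $s_j\in V$ is ``the hard part,'' with no construction and no argument that such $s_j$ exist when the value groups are non-archimedean or when the $V_i$ are dependent on $V$. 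Since the classical power-raising trick is unavailable (as you note), this is not a routine verification but precisely the difficulty the theorem is about; as it stands the proof is incomplete.

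For comparison, the paper's argument avoids cosets entirely and is very short. Assume the $V_i$ incomparable and split the indices by whether $x\in V_i$, say $x\in V_1\cap\cdots\cap V_k$ and $x\notin V_{k+1}\cup\cdots\cup V_n$. If $k=0$ take $v=0$. If $k=n$, then $V\nsubseteq V_1\cup\cdots\cup V_n$ by the Valuation Avoidance Lemma \cite[Corollary 3.10]{azsp} (otherwise $V\subseteq V_i$ and hence $V[x]\subseteq V_i$), and any $v\in V\setminus\bigcup_i V_i$ works. If $1\le k\le n-1$, apply the intersection form of the avoidance lemma \cite[Remark 3.11]{azsp} to conclude that $V\cap V_{k+1}\cap\cdots\cap V_n\nsubseteq V_1\cup\cdots\cup V_k$: a containment would force either $V\subseteq V_i$ for some $i\le k$ (hence $V[x]\subseteq V_i$, since $x\in V_i$), or $V_j\subseteq V_i$ with $j>k\ge i$, contradicting incomparability. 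Any $v$ in that intersection but outside $V_1\cup\cdots\cup V_k$ then satisfies $v+x\notin V_i$ for every $i$: for $i\le k$ because $x\in V_i$ while $v\notin V_i$, and for $i>k$ because $v\in V_i$ while $x\notin V_i$. You actually flag the idea of ``splitting the indices according to whether $x\in V_i$'' in passing; pushing that observation through with \cite[Remark 3.11]{azsp}, rather than routing everything through CAL and an unconstructed combination of the $b_j$, is what closes the proof.
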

\begin{proof}
We may assume that $V_1,\ldots, V_n$ are incomparable and $x\in V_1\cap\cdots\cap V_k$ but $x\notin V_{k+1}\cup\cdots\cup V_n$. If $k=0$, then it suffices to take $v=0$; and if $k=n$, then by Valuation Avoidance Lemma we infer that $V\nsubseteq V_1\cup\cdots\cup V_n$, for otherwise $V\subseteq V_i$ for some $i$ and therefore $V[x]\subseteq V_i$ which is absurd. Thus there exists $v\in V\setminus (V_1\cup\cdots\cup V_n)$ and clearly $v+x\notin V_1\cup\cdots\cup V_n$. Hence suppose that $1\leq k\leq n-1$. We claim that $(V\cap V_{k+1}\cap \cdots\cap V_n)\nsubseteq (V_1\cup\cdots\cup V_k)$. To see this note that if $V\cap V_{k+1}\cap \cdots\cap V_n\subseteq V_1\cup\cdots\cup V_k$, then by \cite[Remark 3.11]{azsp}, either $V\subseteq V_i$ for some $1\leq i\leq k$, and therefore $V[x]\subseteq V_i$ which is impossible or  $V_j\subseteq V_i$ for some $1\leq i\leq k$ and $k+1\leq j\leq n$, which again is impossible since $V_i$'s are incomparable. Thus there exists $v\in (V\cap V_{k+1}\cap \cdots\cap V_n)\nsubseteq (V_1\cup\cdots\cup V_k)$ and therefore $v+x\notin \bigcup_{i=1}^n V_i$.
\end{proof}


\end{document}